\documentclass[11pt]{amsart}

\usepackage[margin=1.1in]{geometry}
\usepackage{amsmath,amssymb,amsfonts,amsthm,mathtools}
\usepackage{mathrsfs}
\usepackage{enumitem}
\usepackage[colorlinks=true,citecolor=blue,linkcolor=blue,urlcolor=blue]{hyperref}
\usepackage{tikz}
\usetikzlibrary{arrows.meta,calc,shadings}
\hypersetup{
  colorlinks=true,
  linkcolor=blue,
  citecolor=blue,
  urlcolor=blue
}

\newtheorem{theorem}{Theorem}[section]
\newtheorem{proposition}[theorem]{Proposition}
\newtheorem{question}{Question}[section]
\newtheorem{lemma}[theorem]{Lemma}
\newtheorem{corollary}[theorem]{Corollary}

\numberwithin{equation}{section}

\DeclareMathOperator{\Spec}{Spec}
\DeclareMathOperator{\diam}{diam}
\DeclareMathOperator{\dist}{dist}
\DeclareMathOperator{\Tr}{Tr}

\newcommand{\R}{\mathbb R}
\newcommand{\HH}{\mathcal H}
\newcommand{\EE}{\mathcal E}
\newcommand{\Om}{\Omega}
\newcommand{\eps}{\varepsilon}
\newcommand{\mm}{\mathfrak m}
\newcommand{\RCD}{\mathrm{RCD}}

\title[Non-degenerate Local Maxima on Positively Curved two-Spheres]{Arbitrarily Many Non-degenerate Local Maxima of First Nonzero Eigenfunctions on Positively Curved Two-spheres}
\author{Heng Zhang}
\address{School of Mathematical Sciences, University of Science and Technology of China, Hefei, China}
\email{hengz@mail.ustc.edu.cn}
\date{}

\begin{document}

\begin{abstract}
We prove that, for every integer $m\ge 2$, there is a smooth closed Riemannian
surface $(M,g)$ diffeomorphic to $\mathbb S^2$, with positive Gaussian curvature, such
that $\lambda_1(M)$ is simple and, after choosing the sign, its normalized first
nonzero Laplace eigenfunction has at least $m$ distinct non-degenerate local
maxima.  This gives a negative answer to the Open Question of Grossi and
Provenzano (Math. Ann. 389(4): 3447--3470, 2024).
\end{abstract}

\maketitle

\section{Introduction}

\subsection{Background and main result}

The number and configuration of critical points of Laplace eigenfunctions is a
classical problem in spectral geometry.  In dimension two, the local structure of
nodal sets of eigenfunctions goes back to Cheng \cite{Cheng1976}, while critical
sets of solutions of elliptic equations have been studied, for example, by Hardt,
Hoffmann-Ostenhof, Hoffmann-Ostenhof and Nadirashvili
\cite{HardtHoffmannOstenhofNadirashvili1999}.  Jakobson and Nadirashvili
constructed eigenfunctions with few critical points
\cite{JakobsonNadirashvili1999}, and Freitas constructed surfaces whose first
nonzero eigenfunctions have closed nodal lines and interior hot spots
\cite{Freitas2002}.

We use the sign convention that $-\Delta$ is the non-negative Laplacian.  For a
closed connected Riemannian manifold, or more generally for a compact connected
metric-measure space on which the Laplacian has discrete spectrum, we write its
eigenvalues as
\begin{equation}\label{eq:eigenvalue-indexing}
0=\lambda_0<\lambda_1\le \lambda_2\le\cdots,
\end{equation}
counted with multiplicity.  Throughout the paper, a \emph{second eigenfunction}
means an eigenfunction corresponding to $\lambda_1$, the first positive
eigenvalue in the indexing convention \eqref{eq:eigenvalue-indexing}.

Without strong geometric restrictions, the critical or level-set geometry of
eigenfunctions may be very complicated.  For instance, Enciso and Peralta-Salas
proved a prescribed nodal-set realization result for eigenfunctions on compact
manifolds \cite{EncisoPeraltaSalas}.  Buhovsky, Logunov and Sodin constructed a
metric on $\mathbb T^2$ such that, for infinitely many eigenvalues, a
corresponding eigenfunction has infinitely many isolated critical points
\cite{BuhovskyLogunovSodin}.  B\'erard, Charron and Helffer constructed smooth
metrics on $\mathbb T^2$ and $\mathbb S^2$, arbitrarily close to the flat and
round metrics respectively, for which suitable eigenfunctions have infinitely
many connected components of a level-set complement
\cite{BerardCharronHelffer}.  These results show that large nodal, critical, or
level-set complexity is possible for general metrics.  They do not, however,
settle what happens at the first positive eigenvalue under a positive Gaussian
curvature assumption.

This leads naturally to the question of whether curvature or convexity-type
hypotheses can restore a rigid critical-point structure for low-energy
eigenfunctions.  In the planar theory, convexity is known to impose strong
restrictions in several related problems.  Classical log-concavity and convexity
results for first Dirichlet eigenfunctions and associated variational problems
go back to Brascamp--Lieb \cite{BrascampLieb1976} and Caffarelli--Spruck
\cite{CaffarelliSpruck1982}.  For semilinear elliptic equations,
Cabr\'e--Chanillo \cite{CabreChanillo1998} and
De Regibus--Grossi--Mukherjee \cite{DeRegibusGrossiMukherjee2021} obtained
representative uniqueness results for critical points in convex planar domains.
For second Dirichlet eigenfunctions in convex planar domains, the number of
critical points was studied by De Regibus and Grossi
\cite{DeRegibusGrossi2022}.  Thus, from the point of view of convexity and
curvature, it is natural to ask whether analogous uniqueness phenomena persist
for second eigenfunctions on closed positively curved surfaces.

Grossi and Provenzano \cite{GrossiProvenzanoPublished} recently pursued this
theme in two directions.  They obtained uniqueness results for critical points of
semi-stable solutions on convex domains in two-dimensional model spaces, and
they also studied second eigenfunctions on closed manifolds of revolution.  For a
closed surface of revolution diffeomorphic to $\mathbb S^2$, their curvature
condition is equivalent to positive Gaussian curvature.  Motivated by this
setting, they asked the following question
\cite[Open Question~3.19]{GrossiProvenzanoPublished}.

\begin{question}\label{question}
Let $M$ be a closed Riemannian surface diffeomorphic to $\mathbb S^2$ of positive
Gaussian curvature, and let $u$ be a second eigenfunction of the Laplacian on
$M$.  Is it true that $u$ has two non-degenerate critical points, a maximum and a
minimum?
\end{question}

\begin{figure}[htbp]
\centering
\begin{tikzpicture}[>=Latex, every node/.style={font=\small}]

  \begin{scope}[shift={(0,0)}]
    \shade[ball color=gray!25] (0,0) circle (1.5);
    \draw[thick] (0,0) circle (1.5);

    \draw[blue!60!black, densely dotted] (-1.05,0.72) arc (180:360:1.05 and 0.17);
    \draw[blue!60!black]                 ( 1.05,0.72) arc (  0:180:1.05 and 0.17);

    \draw[blue!60!black, dashed]         (-1.50,0.00) arc (180:360:1.50 and 0.28);
    \draw[blue!60!black]                 ( 1.50,0.00) arc (  0:180:1.50 and 0.28);

    \draw[blue!60!black, densely dotted] (-1.05,-0.72) arc (180:360:1.05 and 0.17);
    \draw[blue!60!black]                 ( 1.05,-0.72) arc (  0:180:1.05 and 0.17);

    \fill[red!80!black] (0,1.5) circle (2.2pt);
    \node[red!80!black, align=center] at (0,2.05)
      {maximum};

    \fill[blue!80!black] (0,-1.5) circle (2.2pt);
    \node[blue!80!black, align=center] at (0,-2.10)
      {minimum};

    \draw[densely dotted] (0,1.5) -- (2.1,1.5);
    \draw[densely dotted] (0,-1.5) -- (2.1,-1.5);
    \draw[->,thick] (2.1,-1.55) -- (2.1,1.75);
    \node at (2.05,1.95) {$h=z$};

    \node[align=center, text width=4.8cm] at (0,-3.05)
      {(a) $\mathbb S^2$ with the round metric:\\
       $h|_{\mathbb S^2}$ has exactly\\
       two critical points.};
  \end{scope}

  \begin{scope}[shift={(7.4,0.05)}]
    \shade[top color=gray!8,bottom color=gray!20]
      (-2.5,0)
      .. controls (-1.8,0.72) and (1.8,0.72) .. (2.5,0)
      .. controls (1.8,-0.72) and (-1.8,-0.72) .. cycle;

    \draw[thick]
      (-2.5,0)
      .. controls (-1.8,0.72) and (1.8,0.72) .. (2.5,0)
      .. controls (1.8,-0.72) and (-1.8,-0.72) .. cycle;

    \draw[thick, gray!70!black]
      (-2.1,0.15) .. controls (-0.8,0.42) and (0.8,0.42) .. (2.1,0.15);

    \draw[dashed, gray!65!black]
      (-2.0,-0.12) .. controls (-0.7,-0.42) and (0.7,-0.42) .. (2.0,-0.12);

    \foreach \x/\lab in {-1.85/p_1, -0.65/p_2, 0.65/p_3, 1.85/p_m}{
      \fill[red!80!black] (\x,0.20) circle (2.2pt);
      \draw[red!80!black, -{Latex[length=2mm]}] (\x,0.22) -- ++(0,0.28);
      \node[red!80!black, above] at (\x,0.45) {$\lab$};
    }

    \node[red!80!black, align=center] at (0,1.35)
      {several non-degenerate\\local maxima of $u$};

    \node[gray!70!black] at (0,-1.45)
      {seam of a doubled convex domain};

    \node[align=center, text width=5.5cm] at (0,-3.20)
      {(b) Positive curvature does not force\\
       the two-critical-point picture.};
  \end{scope}

\end{tikzpicture}
\caption{A schematic contrast related to Question~\ref{question}. 
On the round sphere, the height function has exactly two critical points,
a maximum and a minimum. In contrast, on suitably chosen positively curved spheres,
a second eigenfunction may have several distinct non-degenerate local maxima.}
\label{fig:question-motivation}
\end{figure}

The main result of this paper gives a negative answer in a strong form.  Positive
Gaussian curvature alone does not force a second eigenfunction on a two-sphere to
have only one maximum and one minimum; in fact, the number of non-degenerate
local maxima can be made arbitrarily large.

\begin{theorem}\label{thm:main}
For every integer $m\ge 2$ there exists a smooth embedded surface
$M_m\subset\R^3$, diffeomorphic to $\mathbb S^2$, whose induced Riemannian metric
has positive Gaussian curvature, such that $\lambda_1(M_m)$ is simple and the
normalized $\lambda_1(M_m)$-eigenfunction, after choosing its sign, has at least
$m$ distinct non-degenerate local maxima.
\end{theorem}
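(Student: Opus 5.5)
The plan follows the picture in Figure~\ref{fig:question-motivation}(b): first build a singular model, the doubling of a thin convex planar domain, and then smooth it.

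\textbf{Step 1: the model.} Fix a convex planar domain $\Om\subset\R^2$. Its double $D\Om$ is two copies of $\Om$ glued along $\partial\Om$. This is a convex (Alexandrov) surface of curvature $\ge 0$, i.e.\ an $\RCD(0,2)$ space. Even functions on $D\Om$ correspond to Neumann eigenfunctions on $\Om$, and odd functions correspond to Dirichlet eigenfunctions. Because the first Dirichlet eigenvalue exceeds the first nonzero Neumann eigenvalue, $\lambda_1(D\Om)=\mu_1^N(\Om)$, and the eigenfunctions are even lifts of Neumann eigenfunctions.

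Take $\Om$ long and thin: a slab of length $L$ and width $\eps$, with its profile chosen so that its width function $w(x)$ has $m$ small "bulges" (local maxima of width), arranged so that $\Om$ stays convex. A convex profile permits only one width maximum, so I would instead modulate the domain weakly and place the maxima where the Neumann eigenfunction itself has interior maxima.

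Convexity makes the Sturm--Liouville reduction rigid. As $\eps\to0$, $\mu_1^N$ converges to the first nonzero eigenvalue of $-(wu')'/w=\mu u$ on $[0,L]$, and the resulting 1D eigenfunction is monotone. That forces a single maximum. So the thin-domain route must be modified: I would instead use a nearly flat double in which $\Om$ is a regular polygon-like convex domain, or a rectangle-with-perturbation, where $\mu_1^N$ is degenerate. For instance, a disk has $\mu_1^N$ of multiplicity $2$, and the Neumann eigenfunction $J_1(kr)\cos\theta$ has its maximum on the boundary. On the double, the maximum then lies on the seam.

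A cleaner idea: take $\Om$ to be a convex domain with $m$ boundary points where $\partial\Om$ has very high curvature, i.e.\ near-corners. The first Neumann eigenfunction of a domain close to an obtuse polygon attains its maximum at several boundary vertices, and on the double these become local maxima on the seam.

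\textbf{Step 2: choosing $\Om$ concretely.} Concretely, I would take $\Om$ to be a long thin convex polygon-like shape close to a segment, with $m$ vertices on the upper side. The eigenfunction is close to $\cos(\pi x/L)$, with a correction that is $O(\eps)$ in the transverse direction. Because the domain is thin, the $C^2$ structure near the seam is governed by the local geometry. Each convex boundary kink produces a local maximum of $u$ restricted to a tangential neighbourhood, provided $u$ is increasing toward the kink in the normal direction. Then I would compute the Hessian of the doubled function at the seam points to get non-degeneracy. I expect this computation to be the main obstacle: showing that $m$ separate strict local maxima appear for the first eigenfunction, with non-degenerate Hessian, despite the near-monotonicity in $x$. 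The first thing I would try is a perturbative expansion in $\eps$ that keeps second-order transverse terms and exhibits the bumps.

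\textbf{Step 3: smoothing and conclusion.} Approximate $D\Om$ by smooth convex surfaces $M\subset\R^3$, namely boundaries of the $\delta$-thickened domains $\{\dist(\cdot,\Om\times\{0\})\le\delta\}$ smoothed to have strictly positive curvature. These converge to $D\Om$ in the measured Gromov--Hausdorff sense. Spectral convergence for $\RCD(0,2)$ spaces then gives eigenvalue convergence, simplicity is preserved (it holds in the model by choosing $\Om$ without symmetry), and eigenfunctions converge. Interior elliptic estimates upgrade this to $C^2$ convergence away from the seam, and non-degenerate maxima persist under $C^2$ perturbation. For maxima located on the seam, I would arrange the smoothing so that the convergence is also $C^2$ in adapted coordinates there, which is delicate. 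Preferably, Step 2 should place the maxima in the interior of the copies of $\Om$, which avoids this issue.
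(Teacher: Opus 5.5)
Your Step~1 reduction is the paper's: the double, the even/odd splitting into Neumann and Dirichlet spectra, $\mu_1^N(\Om)<\lambda_1^D(\Om)$, the $\RCD(0,2)$ structure, and spectral convergence along smooth convex approximations. The argument nevertheless has two genuine gaps.

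\textbf{First, the planar input is never produced.} Everything hinges on a convex $\Om$ with $\mu_1^N(\Om)$ simple and an eigenfunction having $m$ isolated strict local maxima in $\overline\Om$; these then become seam maxima of the even double.

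Your concrete candidate in Step~2, a thin domain near a segment with $m$ kinks on the upper side, fails for the reason you yourself identified. There $u$ is, to leading order, the monotone Sturm--Liouville profile $U(x)$, close to $\cos(\pi x/L)$ in your setting. So its tangential derivative along the upper side is of order one away from the ends, and kinks of angle $\pi-O(\eps)$ cannot create local maxima.

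The heuristic that near-corners carry maxima is also false in general. Near a vertex of interior angle $\theta\in(\pi/2,\pi)$, a Neumann eigenfunction expands as $u(p)+a_1r^{\pi/\theta}\cos(\pi\varphi/\theta)+O(r^2)$. Unless $a_1=0$, it therefore increases along one edge and decreases along the other. Likewise, ``choosing $\Om$ without symmetry'' does not give simplicity for the specific domain you need; simplicity has to come out of the same construction as the maxima.

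The paper takes exactly this input from Miyamoto's Theorem~A: a convex polygon with simple $\mu_1^N$ whose eigenfunction has exactly $m$ isolated strict boundary maxima. That is a delicate existence theorem, and your perturbative sketch does not supply it.

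\textbf{Second, non-degeneracy cannot be read off from the limit.} Spectral convergence on $\RCD(0,2)$ spaces gives only uniform $C^0$ convergence of $u_j$ to the doubled eigenfunction along the approximation maps. The $C^2$ upgrade at the seam is exactly what you leave open. At polygon vertices the double has a cone point of angle $2\theta<2\pi$, where the doubled eigenfunction has no classical Hessian. Moreover, the available planar input gives strict, not non-degenerate, maxima. Your fallback of putting the maxima in the interior of $\Om$ would need a convex domain whose first Neumann eigenfunction has interior local maxima, which you do not have.

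The missing idea is to transfer only strictness and then perturb the metric:
\begin{enumerate}
\item A topological persistence argument converts $C^0$ convergence plus $m$ strict local maxima of the limit into pairwise disjoint open sets $W_{i,j}\subset M_j$ with $\max_{\overline W_{i,j}}u_j>\sup_{\partial W_{i,j}}u_j$.
\item These gaps survive small $C^\infty$ perturbations of the metric, because the simple eigenfunction depends $C^0$-continuously on the metric.
\item Uhlenbeck's genericity theorem gives a nearby metric whose first eigenfunction is Morse, so the maxima found in the $W_{i,j}$ are non-degenerate. This metric still has $K>0$, since positive curvature is a $C^2$-open condition.
\item The perturbed metric is no longer induced by your explicit surface, so the Nirenberg--Pogorelov solution of Weyl's problem is needed to realize it as an embedded strictly convex surface in $\R^3$.
\end{enumerate}
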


Taking $m=2$ in Theorem~\ref{thm:main}, one obtains a positively curved
Riemannian two-sphere whose second eigenfunction, with a suitable sign choice,
has at least two distinct non-degenerate local maxima.  Since every local
maximum of a smooth eigenfunction is a critical point, such an eigenfunction
cannot have exactly two critical points consisting of one maximum and one
minimum.  Thus Question~\ref{question} has a negative answer.

\subsection{Strategy for the construction}

The proof is based on a degeneration to a doubled planar domain.  The key point
is to find a planar first eigenfunction whose extrema are already sufficiently
non-rigid, and then to transfer this behavior to a closed positively curved
sphere.

For a bounded connected planar domain $U$, we write
\[
0=\mu_0^N(U)<\mu_1^N(U)\le\mu_2^N(U)\le\cdots
\]
for the Neumann spectrum and
\[
0<\lambda_1^D(U)\le\lambda_2^D(U)\le\cdots
\]
for the Dirichlet spectrum, in both cases counted with multiplicity.

The relevant planar tool comes from the classical hot spots problem for Neumann
eigenfunctions.  Rauch's hot spots conjecture predicts that the extrema of a
first nontrivial Neumann eigenfunction of a bounded insulated domain should occur
on the boundary \cite{Rauch1975}.  Although the conjecture is false for general
planar domains \cite{BurdzyWerner1999,BanuelosBurdzy1999}, many positive results
are known under additional geometric assumptions; see, for instance,
\cite{JerisonNadirashvili2000,AtarBurdzy2004,Pascu2002,Steinerberger2020,JudgeMondal2020,ChenGuiYao2026}.

Miyamoto's construction \cite{MiyamotoJJIAM} exhibits a different phenomenon,
which is precisely the one needed here.  It does not provide a failure of the
boundary hot spots conclusion in convex domains.  Rather, it shows that even in
the class of convex polygons, the boundary hot spots of a first positive Neumann
eigenfunction need not consist of a single maximum and a single minimum.  More
precisely, in \cite[Theorem A]{MiyamotoJJIAM}, one obtains a
convex polygon $\Om$ whose first positive Neumann eigenvalue is simple and whose
corresponding Neumann eigenfunction $\phi$ has $m$ isolated boundary maxima.

We then pass from this planar Neumann problem to a closed surface by forming the
metric double of $\Om$,
\[
X=D\Om,
\]
obtained by gluing two copies of $\overline\Om$ along $\partial\Om$.  This double
is an Alexandrov sphere of curvature at least zero.  The involution exchanging
the two sheets decomposes $L^2(X)$, and hence the spectrum of $X$, into even and
odd parts.  The even part is unitarily equivalent to the Neumann spectrum of
$\Om$, while the odd part is unitarily equivalent to the Dirichlet spectrum of
$\Om$.  By Filonov's strict Friedlander inequality \cite{Filonov}, we have
\[
\lambda_1(X)
=
\min\{\mu_1^N(\Om),\lambda_1^D(\Om)\}
=
\mu_1^N(\Om),
\]
so the first positive eigenvalue of the double is realized in the even part of
the spectral decomposition.  Since $\mu_1^N(\Om)$ is simple, the
$\lambda_1(X)$-eigenspace is spanned by the even double $\Phi$ of Miyamoto's
Neumann eigenfunction.  After choosing the sign of $\phi$ so that Miyamoto's hot
spots are maxima, these boundary hot spots become strict local maxima of $\Phi$
at the corresponding seam points of the double.

The remaining task is to replace the singular Alexandrov sphere $X$ by smooth
positively curved spheres while preserving both the first eigenfunction and its
local maxima.  We do this by a pancake approximation.  First we replace the
polygon by smooth strictly convex inner domains
\[
\Om_\delta=\{\psi_\delta\le 1\},
\qquad
\psi_\delta(x)=\sum_{\alpha=1}^{L} \exp(\ell_\alpha(x)/\delta),
\]
where the affine functions $\ell_\alpha$ define the sides of $\Om$, equivalently
\[
\Om=\bigcap_{\alpha=1}^{L}\{\ell_\alpha\le 0\}.
\]
Over each $\Om_\delta$ we take the thin strictly convex body
\[
C_{\eps,\delta}
=
\left\{(x,z)\in\R^2\times\R:
\psi_\delta(x)+\frac{z^2}{\eps^2}\le 1
\right\},
\qquad
M_{\eps,\delta}:=\partial C_{\eps,\delta}.
\]
The surfaces $M_{\eps,\delta}$ are smooth embedded two-spheres with positive
Gaussian curvature.  For fixed $\delta$, as $\eps\downarrow0$, the upper and
lower graphs
\[
z=\pm \eps\sqrt{1-\psi_\delta(x)}
\]
collapse to the metric double $D\Om_\delta$; then $D\Om_\delta$ converges to
$D\Om$ as $\delta\downarrow0$.  Choosing a diagonal sequence gives smooth
positively curved pancakes $M_j\subset\R^3$ converging to $X$ in measured
Gromov--Hausdorff sense.

Next, the spectral and topological features of the limit survive along this
approximation.  The spaces involved lie in the compact $\RCD(0,2)$ setting, and
$\lambda_1(X)$ is simple.  Honda's spectral convergence theorem \cite{Honda}
therefore gives uniform convergence, after choosing signs, of the
$L^2$-normalized $\lambda_1(M_j)$-eigenfunctions to the even doubled eigenfunction
$\Phi$, along suitable measured Gromov--Hausdorff approximation maps.  A
topological persistence lemma then ensures that the strict local maxima of
$\Phi$ on the limit produce distinct local maxima of the corresponding first
eigenfunctions on all sufficiently close smooth pancakes.  More precisely, the
argument gives pairwise disjoint regions on a smooth pancake on which the first
eigenfunction has a strict boundary gap.

The final non-degeneracy step is a generic perturbation argument.  Since the
first positive eigenvalue on the selected pancake is simple, the corresponding
normalized eigenfunction varies continuously under small smooth perturbations of
the metric.  The boundary gaps above therefore persist.  By Uhlenbeck's generic
Morse property for eigenfunctions \cite{Uhlenbeck1976}, one may choose an
arbitrarily small smooth perturbation of the metric for which the first
eigenfunction is Morse.  The positive curvature condition is open in the
$C^2$ topology, so this perturbation can be made within the class of positively
curved metrics.  Finally, the smooth solution of the Weyl isometric embedding
problem for positively curved metrics on $\mathbb S^2$ realizes the perturbed
metric as a smooth strictly convex embedded surface in $\R^3$
\cite{Nirenberg1953,Pogorelov1952}.

\subsection{Paper Organization}

The paper is organized as follows.  Section~\ref{sec:neumann-input} records the
planar Neumann tool.  Section~\ref{sec:double-spectrum} identifies the first
positive spectrum of the metric double.  Section~\ref{sec:pancakes} constructs
smooth positively curved pancake approximations.  Section~\ref{sec:spectral}
combines $\RCD$ spectral convergence with a topological persistence lemma for
strict local maxima and records the generic perturbation lemma used to obtain
non-degeneracy.  The proof of Theorem~\ref{thm:main} is completed in
Section~\ref{sec:proof-main}.

\subsection*{Acknowledgment}
The author thanks Jiangcheng You for helpful discussions.

\section{The planar Neumann tool}\label{sec:neumann-input}

   We use the following form of Miyamoto's polygonal hot-spot construction; see
\cite[Theorem A]{MiyamotoJJIAM} and the construction there.

\begin{theorem}[Miyamoto hot-spot polygons]\label{thm:miyamoto}
For every integer $m\ge 2$ there exists a bounded convex polygon
$\Om\subset\R^2$ such that the first positive Neumann eigenvalue $\mu_1^N(\Om)$
is simple and such that a real $\mu_1^N(\Om)$-eigenfunction $\phi$ has exactly
$m$ global maximum points in $\overline\Om$.  These points all lie on
$\partial\Om$ and are isolated strict local maxima relative to $\overline\Om$.
\end{theorem}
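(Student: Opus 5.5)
This statement is essentially a restatement of \cite[Theorem A]{MiyamotoJJIAM}, so the plan is to reduce it to Miyamoto's construction. The only extra work is to extract the specific properties used later: simplicity of $\mu_1^N(\Om)$, the count of global maxima, their location on $\partial\Om$, and their isolation as strict local maxima.

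\textbf{Recalling the construction.} Miyamoto's polygon is a small perturbation of a highly symmetric configuration. One can take a convex polygon built from a regular $m$-gon-like arrangement, or a long thin domain with $m$ prescribed boundary "tips". In this configuration the first Neumann eigenfunction is close to an explicit limiting profile, for instance a function depending on one variable in a thin limit. That profile attains its maximum at $m$ boundary vertices or boundary points, related by symmetry. I would quote Theorem A for three facts:
\begin{itemize}[leftmargin=*]
\item $\mu_1^N(\Om)$ is simple;
\item after a sign choice, the eigenfunction $\phi$ has exactly $m$ global maximum points in $\overline\Om$;
\item these points lie on $\partial\Om$.
\end{itemize}
The first two are stated explicitly there. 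Boundary location also follows independently: an interior maximum $x_0$ of $\phi$ would give $\Delta\phi(x_0)\le0$, and hence $\phi(x_0)\ge0$. That inequality alone does not exclude an interior maximum, so I would rely on Miyamoto's statement for this point.

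\textbf{Isolation and strictness.} A finite set of global maximum points is automatically isolated. Each point $p$ in it is a strict local maximum relative to $\overline\Om$: choose a neighborhood $V$ of $p$ containing no other point of the finite set. Then for every $x\in V\cap\overline\Om$ with $x\ne p$, $x$ is not a global maximum point, so $\phi(x)<\max\phi=\phi(p)$. So once the count is exactly $m$, strictness is purely formal. This is why I would phrase the argument via exactness of the count rather than through local non-degeneracy at corners.

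\textbf{Main obstacle.} The only substantive issue is making sure Theorem A really gives \emph{exactly} $m$ global maxima together with simplicity, rather than "at least $m$ local maxima". If Miyamoto only gives $m$ boundary local maxima, I would adjust the argument as follows. Take the symmetric version of the polygon, invariant under a group acting transitively on the $m$ candidate points. By simplicity, $\phi$ is invariant up to sign under this group, and in fact invariant, since the maxima are permuted. Hence all $m$ local maxima share the same value. Finally, I would use Miyamoto's asymptotic description of $\phi$ to check that this common value is the global maximum and is not attained elsewhere.
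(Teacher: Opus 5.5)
Your main line matches the paper, which gives no independent argument and simply takes the statement from Miyamoto's Theorem~A and the construction there; your remark that isolation and strictness are automatic once the global maximum set consists of exactly $m$ points is also exactly what Lemma~\ref{lem:strict-neighborhoods-domain} later relies on. You should drop the fallback symmetrization, however, because it cannot work for $m\ge3$. For convex $\Om$ with $\mu_1^N(\Om)$ simple, the nodal line of $\phi$ is a single arc with two endpoints on $\partial\Om$, so a nontrivial rotation of $\Om$ fixing $\phi$ would have to be the half-turn swapping these endpoints, and that half-turn interchanges $\{\phi>0\}$ and $\{\phi<0\}$. Hence the isometries fixing $\phi$ form a group of order at most $2$. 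These are the only isometries that can permute its maxima, since the others send maxima to minima. So no symmetric polygon can have $m\ge3$ maxima in one orbit, and the step ``invariant, since the maxima are permuted'' is circular in any case.
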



Fix $m\ge2$, and choose $\Om$ and $\phi$ as in
Theorem~\ref{thm:miyamoto}.  Since $\phi$ satisfies the weak Neumann eigenvalue
equation, it also solves the resolvent problem
$$
(-\Delta+1)\phi=(\mu_1^N(\Om)+1)\phi
$$
with homogeneous Neumann boundary condition.  As $\Om$ is a bounded convex
polygon, Grisvard's Neumann regularity theorem on bounded convex domains
\cite[Theorem~3.2.1.3]{Grisvard} gives $\phi\in H^2(\Om)$.  Since $\Om$ is a
bounded Lipschitz domain in $\mathbb R^2$, the limiting Sobolev--Morrey embedding
\cite[Section 1.4]{Grisvard} implies
$$
H^2(\Om)\hookrightarrow C^{0,\alpha}(\overline\Om)
\qquad\text{for every }0\le\alpha<1.
$$
Hence $\phi$ has a continuous representative on $\overline\Om$, and throughout
the sequel $\phi$ denotes this representative. We normalize $\phi$ by
\begin{equation}\label{eq:phi-normalization}
\int_\Om \phi\,dx=0,
\qquad
\int_\Om \phi^2\,dx=1.
\end{equation}
Write
$$
p_1,\ldots,p_m\in\partial\Om
$$
for the isolated boundary maximum points of $\phi$.  Let
\begin{equation}\label{eq:max-set}
\mathcal M_\phi:=\{x\in\overline\Om:\phi(x)=\max_{\overline\Om}\phi\}
\end{equation}
be the compact global maximum set.  By Theorem~\ref{thm:miyamoto},
$\mathcal M_\phi=\{p_1,\ldots,p_m\}$.

\begin{lemma}\label{lem:strict-neighborhoods-domain}
For each $i=1,\ldots,m$ there are pairwise disjoint relatively open
neighborhoods $V_i\subset \overline\Om$ of $p_i$ and a number $\eta>0$ such that
\begin{equation}\label{eq:Vi-meets-max-set}
V_i\cap \mathcal M_\phi=\{p_i\}
\end{equation}
and
\begin{equation}\label{eq:boundary-gap-domain}
\phi(p_i)\ge \sup_{\partial_{\overline\Om} V_i}\phi+4\eta.
\end{equation}
Here $\partial_{\overline\Om}V_i$ denotes the boundary of $V_i$ relative to
$\overline\Om$.
\end{lemma}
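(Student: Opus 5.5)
The plan is to take the $V_i$ to be small relative balls around the $p_i$ and read off the gap from compactness of their relative boundaries. Since $\mathcal M_\phi=\{p_1,\ldots,p_m\}$ is a finite set of distinct points, set
\[
r_0:=\tfrac13\min_{i\ne j}|p_i-p_j|>0 .
\]
For $0<r\le r_0$ define $V_i:=B(p_i,r)\cap\overline\Om$. These sets are relatively open in $\overline\Om$ and contain $p_i$. They are pairwise disjoint because $2r<|p_i-p_j|$. Moreover $V_i\cap\mathcal M_\phi=\{p_i\}$, because the only point of $\mathcal M_\phi$ within distance $r_0$ of $p_i$ is $p_i$ itself. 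This gives \eqref{eq:Vi-meets-max-set} for any such choice of $r$.

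For the gap \eqref{eq:boundary-gap-domain}, first note that the relative boundary satisfies
\[
\partial_{\overline\Om}V_i\subset S_i:=\{x\in\overline\Om:|x-p_i|=r\}.
\]
Indeed, points of $\overline\Om$ with $|x-p_i|<r$ are relative interior points of $V_i$. Points with $|x-p_i|>r$ have a neighbourhood disjoint from $\overline{B(p_i,r)}$, so they are not in the relative closure of $V_i$.

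The set $S_i$ is compact. It is also nonempty: $\overline\Om$ is connected, contains $p_i$, and contains the point $p_j$ with $|p_j-p_i|>r$. Since $S_i\cap\mathcal M_\phi=\emptyset$ and $\phi$ is continuous on $\overline\Om$ (we use the continuous representative fixed above), $\phi$ attains its maximum on $S_i$. That maximum is therefore strictly less than $\max_{\overline\Om}\phi=\phi(p_i)$; note that all $p_i$ share the same value, namely the global maximum.

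Define
\[
\eta:=\tfrac14\min_{1\le i\le m}\Bigl(\phi(p_i)-\max_{S_i}\phi\Bigr)>0 .
\]
Then $\sup_{\partial_{\overline\Om}V_i}\phi\le\max_{S_i}\phi\le\phi(p_i)-4\eta$ for every $i$, which is \eqref{eq:boundary-gap-domain}. If some $\partial_{\overline\Om}V_i$ were empty, the supremum would be $-\infty$ and the inequality would hold trivially. In fact this cannot happen, again by connectedness of $\overline\Om$.

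The argument is short. The only points needing care are the inclusion of the relative boundary in the sphere slice $S_i$ and the continuity of $\phi$ up to $\partial\Om$. Continuity is already provided by the $H^2\hookrightarrow C^{0,\alpha}$ discussion. I expect the key input, that $\phi(p_i)$ strictly exceeds the values of $\phi$ on $S_i$, to come precisely from the fact that every $p_i$ is a global maximum point and that $\mathcal M_\phi$ is exactly this finite set, by Theorem~\ref{thm:miyamoto}.
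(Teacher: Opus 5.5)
Your proof is correct and follows essentially the same route as the paper: isolate each $p_i$ in the finite maximum set $\mathcal M_\phi$ by pairwise disjoint neighborhoods whose relative boundaries avoid $\mathcal M_\phi$, then use continuity of $\phi$ on the compact boundary sets to get positive gaps, take their minimum and divide by $4$. Your explicit choice of relative balls, with the relative boundary contained in the compact slice $S_i$, simply makes concrete the neighborhoods that the paper leaves abstract.
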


\begin{proof}
Since $p_i$ is an isolated point of the compact maximum set \eqref{eq:max-set},
choose pairwise disjoint relative neighborhoods $V_i$ whose closures meet
$\mathcal M_\phi$ only at $p_i$ and whose relative boundaries do not meet
$\mathcal M_\phi$.  This gives \eqref{eq:Vi-meets-max-set}.  The continuous
function $\phi$ is strictly smaller than $\phi(p_i)$ on each compact set
$\partial_{\overline\Om}V_i$.  Taking the minimum of the resulting positive gaps
and dividing by $4$ gives a common $\eta>0$, which proves
\eqref{eq:boundary-gap-domain}.
\end{proof}

\section{The metric double and its first positive spectrum}\label{sec:double-spectrum}

Let
\begin{equation}\label{eq:metric-double}
X=D\Om
\end{equation}
be the metric double of $\Om$: two copies $\overline\Om^+$ and
$\overline\Om^-$ are glued by the identity map on $\partial\Om$.  We equip $X$
with its intrinsic length metric and with the measure $\mm_X:=\HH^2_X$.  More
generally, whenever a metric space $Y$ below is equipped with its two-dimensional
Hausdorff measure, we write $\mm_Y:=\HH^2_Y$.  The space $X$ in
\eqref{eq:metric-double} is a compact polyhedral Alexandrov surface homeomorphic
to $\mathbb S^2$.

For $x\in\overline\Om$, we write $x^\pm$ for the corresponding points in the
two sheets; if $x\in\partial\Om$, the points $x^+$ and $x^-$ are identified in
$D\Om$.  We identify $\partial\Om$ with this seam.  If
$V\subset\overline\Om$ is relatively open, we write
$$
DV:=V^+\cup V^-\subset D\Om,
$$
with boundary points identified.

\begin{figure}[htbp]
\centering
\begin{tikzpicture}[>=Latex, every node/.style={font=\small}]

\begin{scope}[shift={(0,1.45)}]
  \coordinate (A+) at (-1.65,-0.10);
  \coordinate (B+) at (-0.95, 0.85);
  \coordinate (C+) at ( 0.35, 1.02);
  \coordinate (D+) at ( 1.55, 0.38);
  \coordinate (E+) at ( 1.15,-0.82);
  \coordinate (F+) at (-0.75,-0.95);

  \fill[gray!10] (A+)--(B+)--(C+)--(D+)--(E+)--(F+)--cycle;
  \draw[thick, red!75!black]
    (A+)--(B+)--(C+)--(D+)--(E+)--(F+)--cycle;

  \node at (0,0.03) {$\overline{\Omega}^{+}$};
  \node[red!75!black, right] at (1.70,0.18) {$\partial\Omega$};
\end{scope}

\begin{scope}[shift={(0,-1.45)}]
  \coordinate (A-) at (-1.65,-0.10);
  \coordinate (B-) at (-0.95, 0.85);
  \coordinate (C-) at ( 0.35, 1.02);
  \coordinate (D-) at ( 1.55, 0.38);
  \coordinate (E-) at ( 1.15,-0.82);
  \coordinate (F-) at (-0.75,-0.95);

  \fill[gray!10] (A-)--(B-)--(C-)--(D-)--(E-)--(F-)--cycle;
  \draw[thick, blue!75!black]
    (A-)--(B-)--(C-)--(D-)--(E-)--(F-)--cycle;

  \node at (0,0.03) {$\overline{\Omega}^{-}$};
  \node[blue!75!black, right] at (1.70,0.18) {$\partial\Omega$};
\end{scope}

\node[gray!70!black, align=center] at (0,2.80)
  {two copies of the same convex polygon};

\draw[->, thick] (2.45, 1.55) -- (5.35, 0.65);
\draw[->, thick] (2.45,-1.55) -- (5.35,-0.65);

\node[align=center] at (3.95,0)
  {glue by the identity\\on $\partial\Omega$};

\begin{scope}[shift={(7.65,0)}]
  \coordinate (A) at (-1.75,-0.15);
  \coordinate (B) at (-1.05, 0.72);
  \coordinate (C) at ( 0.22, 0.88);
  \coordinate (D) at ( 1.45, 0.35);
  \coordinate (E) at ( 1.10,-0.72);
  \coordinate (F) at (-0.82,-0.85);

  \coordinate (T) at (0.05, 1.78);
  \coordinate (S) at (0.05,-1.78);

  \fill[gray!12] (T)--(A)--(B)--cycle;
  \fill[gray!12] (T)--(B)--(C)--cycle;
  \fill[gray!12] (T)--(C)--(D)--cycle;
  \fill[gray!12] (T)--(D)--(E)--cycle;
  \fill[gray!12] (T)--(E)--(F)--cycle;
  \fill[gray!12] (T)--(F)--(A)--cycle;

  \fill[gray!22] (S)--(A)--(B)--cycle;
  \fill[gray!22] (S)--(B)--(C)--cycle;
  \fill[gray!22] (S)--(C)--(D)--cycle;
  \fill[gray!22] (S)--(D)--(E)--cycle;
  \fill[gray!22] (S)--(E)--(F)--cycle;
  \fill[gray!22] (S)--(F)--(A)--cycle;

  \draw[very thick] (A)--(B)--(C)--(D)--(E)--(F)--cycle;

  \draw[thick] (T)--(A) (T)--(B) (T)--(C) (T)--(D) (T)--(E) (T)--(F);
  \draw[thick] (S)--(A) (S)--(F) (S)--(E);
  \draw[dashed, gray!65!black] (S)--(B) (S)--(C) (S)--(D);

  \node at (0,2.13) {$X=D\Omega$};
  \node at (-0.20,0.55) {$\overline{\Omega}^{+}$};
  \node at (0.25,-0.62) {$\overline{\Omega}^{-}$};
  \node[gray!70!black, align=center] at (0,-2.35)
    {polygonal seam = image of $\partial\Omega$};
\end{scope}

\end{tikzpicture}

\caption{The metric double $X=D\Omega$ of a convex polygon $\Omega$.
Two copies $\overline{\Omega}^{+}$ and $\overline{\Omega}^{-}$ are glued by the
identity map along their common boundary $\partial\Omega$.}
\label{fig:metric-double-polygon}
\end{figure}
We first identify the Sobolev space and energy on the double.  We use the
Riemannian convention that the bilinear energy form associated with the Laplacian
is
$$
\EE(f,g)=\int \langle \nabla f,\nabla g\rangle,
$$
and we write the Cheeger energy $\operatorname{Ch}_X$ on $X$ as
$\operatorname{Ch}_X(f)=\frac12\EE_X(f,f)$.

\begin{lemma}[Sobolev structure of the double]\label{lem:sobolev-double}
Let $\Om\subset\R^2$ be a bounded convex polygonal domain and let $X=D\Om$.
Then, identifying a function on $X$ with its restrictions to the two sheets,
\begin{equation}\label{eq:sobolev-double-space}
H^{1,2}(X)=
\left\{(f_+,f_-)\in H^1(\Om)\oplus H^1(\Om):
\Tr f_+=\Tr f_-\text{ on }\partial\Om\right\}.
\end{equation}
Moreover, for $F=(f_+,f_-)$ and $G=(g_+,g_-)$ in $H^{1,2}(X)$,
\begin{equation}\label{eq:double-energy-form}
\EE_X(F,G)=
\int_\Om \nabla f_+\cdot \nabla g_+\,dx+
\int_\Om \nabla f_-\cdot \nabla g_-\,dx .
\end{equation}
\end{lemma}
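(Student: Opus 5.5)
We identify the Cheeger energy of $X$ with the classical energy on the two sheets. The tool is the standard description of $H^{1,2}$ on a space with a Lipschitz (indeed locally bi-Lipschitz to Euclidean) atlas. On such a space, $H^{1,2}$ is the closure of Lipschitz functions, and the minimal weak upper gradient of a Lipschitz function coincides a.e. with the Euclidean gradient norm in charts.

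\textbf{Step 1 (Lipschitz functions).} First we show that $F\in\mathrm{Lip}(X)$ exactly when each restriction $f_\pm=F|_{\overline\Om^\pm}$ is Lipschitz on $\overline\Om$; the traces then automatically agree on the seam. For the nontrivial direction we use convexity of $\Om$. Any two points of $X$ are joined by a path that is either a segment in one sheet or two segments meeting at a seam point. Moreover, the intrinsic distance restricted to each sheet equals the Euclidean distance, because the segment inside the convex set $\overline\Om$ is admissible and is shortest. Hence each sheet is isometrically embedded, and a function Lipschitz on both sheets with matching seam values is Lipschitz on $X$ with the maximum of the two constants.

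\textbf{Step 2 (local slopes).} For Lipschitz $F$ we compute the minimal weak upper gradient: $|DF|=|\nabla f_\pm|$ $\mathfrak m_X$-a.e. on the interior of each sheet. The seam has $\HH^2_X$-measure zero. Interior points have neighbourhoods isometric to Euclidean discs, and $|DF|$ is local, so Rademacher's theorem together with the Euclidean identification $|DF|=|\nabla F|$ gives the claim. Consequently $\operatorname{Ch}_X(F)=\frac12\sum_\pm\int_\Om|\nabla f_\pm|^2$ on $\mathrm{Lip}(X)$.

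\textbf{Step 3 (closure).} Let $W$ be the right-hand side of \eqref{eq:sobolev-double-space}, with the norm $\sum_\pm\|f_\pm\|_{H^1(\Om)}$. This space is closed, because the trace map is continuous. We then show that $\mathrm{Lip}(X)$ is dense in $W$, and this is the step I expect to be the main obstacle. Given $(f_+,f_-)\in W$, write $f_\pm=\frac{f_++f_-}{2}\pm\frac{f_+-f_-}{2}$. The even part $e=\frac{f_++f_-}{2}$ lies in $H^1(\Om)$ and is approximated by functions in $C^\infty(\overline\Om)$, using Lipschitz boundary regularity; doubling such a function evenly gives a Lipschitz function on $X$ by Step 1. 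The odd part $o=\frac{f_+-f_-}{2}$ has zero trace, so $o\in H^1_0(\Om)$ because $\Om$ is Lipschitz. It is therefore approximated by $C_c^\infty(\Om)$ functions, whose odd doubles are again Lipschitz on $X$. This gives density. Since $\operatorname{Ch}_X$ is lower semicontinuous and equals the $W$-energy on a dense class, and since $W$ is complete, the relaxation defining $H^{1,2}(X)$ coincides with $W$.

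To justify this last point concretely: any $L^2$ limit of Lipschitz functions with bounded energy converges weakly in $W$ by Step 2, so $H^{1,2}(X)\subset W$. Conversely, the density argument gives $W\subset H^{1,2}(X)$, with $\operatorname{Ch}_X(F)\le\frac12\sum_\pm\int|\nabla f_\pm|^2$ by lower semicontinuity. The reverse inequality follows from locality of the minimal weak upper gradient on the open sheets, which gives $|DF|\ge|\nabla f_\pm|$ a.e. by comparison with the Euclidean Cheeger energy on open subdomains. Finally, polarization of the resulting quadratic form yields \eqref{eq:double-energy-form}; in particular $X$ is infinitesimally Hilbertian, as expected for an $\RCD$ space.
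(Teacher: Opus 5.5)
Your proposal is correct and follows essentially the same route as the paper. Both arguments identify the Cheeger energy of Lipschitz functions through locality on the Euclidean sheet interiors, and both obtain $H^{1,2}(X)\subset$ the trace-matching space from a relaxation sequence together with continuity of the trace. Both then approximate an element of that space by Lipschitz functions built from a $C^\infty(\overline\Om)$ approximant plus a $C_c^\infty(\Om)$ correction, using convexity to see that these are Lipschitz across the seam, and conclude by lower semicontinuity and polarization. The differences are cosmetic: you split into even and odd parts where the paper writes $w=f_+$ and $f_--w\in H_0^1(\Om)$, and you get the lower energy bound from locality of the minimal weak upper gradient rather than from weak lower semicontinuity of the Dirichlet energy along the relaxation sequence.
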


\begin{proof}
Let $\mathcal V$ denote the trace-matching subspace displayed in
\eqref{eq:sobolev-double-space}, with the norm inherited from
$H^1(\Om)\oplus H^1(\Om)$.  We use only the standard trace theorem for bounded
Lipschitz domains, the identity $\ker\Tr=H_0^1(\Om)$, the existence of a
bounded $H^1$ extension operator, and the locality and relaxation
characterization of the Cheeger energy.

First consider a Lipschitz function $F$ on $X$.  Its restrictions
$f_\pm=F|_{\overline\Om^\pm}$ are Lipschitz on $\overline\Om$, their boundary
values agree pointwise on $\partial\Om$, and hence their Sobolev traces agree.
Since the seam $\partial\Om$ has zero two-dimensional measure, locality of
minimal weak upper gradients gives that the minimal weak upper gradient of $F$
agrees almost everywhere on the two sheets with the Euclidean norms
$|\nabla f_+|$ and $|\nabla f_-|$.  Equivalently, this follows by covering the
complement of the seam by the two Euclidean polygonal charts and using the
standard identification of Cheeger and Euclidean Sobolev energies there.  Thus
\begin{equation}\label{eq:cheeger-lipschitz-double}
2\operatorname{Ch}_X(F)=
\int_\Om |\nabla f_+|^2\,dx+
\int_\Om |\nabla f_-|^2\,dx
\end{equation}
for Lipschitz $F$.

We next pass from Lipschitz functions to the full Cheeger domain by relaxation.
Let $F\in H^{1,2}(X)$.  Choose Lipschitz functions $F_k$ on $X$ such that
$F_k\to F$ in $L^2(X)$ and
$$
2\operatorname{Ch}_X(F_k)\to 2\operatorname{Ch}_X(F).
$$
Write $F_k=(f_{+,k},f_{-,k})$.  By
\eqref{eq:cheeger-lipschitz-double}, the two sequences
$f_{\pm,k}$ are bounded in $H^1(\Om)$.  After passing to weak limits on the two
sheets, these weak limits are precisely the restrictions $f_\pm$ of $F$, because
$F_k\to F$ in $L^2(X)$.  Since the trace operator on $\Om$ is continuous and
$\Tr f_{+,k}=\Tr f_{-,k}$ for every $k$, we obtain
$$
\Tr f_+=\Tr f_-
\qquad\text{on }\partial\Om.
$$
Thus $F\in\mathcal V$.  Moreover, weak lower semicontinuity of the Euclidean
Dirichlet energy gives
\begin{equation}\label{eq:double-energy-lower-bound}
\int_\Om |\nabla f_+|^2\,dx+
\int_\Om |\nabla f_-|^2\,dx
\le
2\operatorname{Ch}_X(F).
\end{equation}

Conversely, let $(f_+,f_-)\in\mathcal V$.  Put $w=f_+$.  Then
$f_+-w=0$ and $f_--w\in H_0^1(\Om)$, because
$\ker\Tr=H_0^1(\Om)$ for bounded Lipschitz domains.  Since $\Om$ is Lipschitz,
there is a bounded extension operator $H^1(\Om)\to H^1(\R^2)$.  Extending,
mollifying, and restricting back to $\overline\Om$, we obtain functions
$w_k\in C^\infty(\R^2)|_{\overline\Om}$, in particular Lipschitz on
$\overline\Om$, such that $w_k\to w$ in $H^1(\Om)$.  Choose
$a_{-,k}\in C_c^\infty(\Om)$ with
$$
a_{-,k}\to f_--w
\qquad\text{in }H^1(\Om),
$$
and put $a_{+,k}=0$.  Then
$$
F_k=(w_k+a_{+,k},\,w_k+a_{-,k})
$$
has the same pointwise boundary value $w_k|_{\partial\Om}$ on both sheets.
Each restriction is Lipschitz on $\overline\Om$.

Let $L_k$ be a common Euclidean Lipschitz constant for the two restrictions of
$F_k$.  Same-sheet Lipschitz continuity on the double is immediate from the
convexity of $\Om$.  If $x,y\in\overline\Om$ lie on opposite sheets, then for
every $q\in\partial\Om$,
\begin{equation}\label{eq:lipschitz-across-seam}
|F_k(x^+)-F_k(y^-)|
\le
L_k\bigl(|x-q|+|q-y|\bigr).
\end{equation}
Taking the infimum over $q\in\partial\Om$ in
\eqref{eq:lipschitz-across-seam} shows that $F_k$ is Lipschitz on the metric
double.  Moreover
$$
F_k\to(f_+,f_-)
\qquad\text{in }H^1(\Om)\oplus H^1(\Om).
$$
Using \eqref{eq:cheeger-lipschitz-double} and the relaxation definition of the
Cheeger energy, we obtain
\begin{equation}\label{eq:double-energy-upper-bound}
2\operatorname{Ch}_X(F)
\le
\int_\Om |\nabla f_+|^2\,dx+
\int_\Om |\nabla f_-|^2\,dx .
\end{equation}
Together, \eqref{eq:double-energy-lower-bound} and
\eqref{eq:double-energy-upper-bound} give the quadratic energy identity
$$
2\operatorname{Ch}_X(F)=
\int_\Om |\nabla f_+|^2\,dx+
\int_\Om |\nabla f_-|^2\,dx .
$$
This proves the reverse inclusion $\mathcal V\subset H^{1,2}(X)$ and the
quadratic form identity.  The bilinear formula
\eqref{eq:double-energy-form} follows by polarization.
\end{proof}

By Lemma~\ref{lem:sobolev-double}, the form
\eqref{eq:double-energy-form}, with domain
\eqref{eq:sobolev-double-space}, is a densely defined closed non-negative
symmetric form on $L^2(X,\mm_X)$.  By the representation theorem for closed
non-negative forms, it determines a unique non-negative self-adjoint operator.
We call this operator the Friedrichs operator associated with the form and denote
it by $-\Delta_X$.  Equivalently, $F\in D(-\Delta_X)$ and $-\Delta_XF=H$ if and
only if
$$
\EE_X(F,G)=\int_X H G\,d\mm_X
\qquad
\text{for all }G\in H^{1,2}(X).
$$

\begin{lemma}[Even--odd spectral splitting]\label{lem:splitting}
With multiplicities,
\begin{equation}\label{eq:spectral-splitting}
\Spec(-\Delta_X)=\Spec_N(\Om)\cup \Spec_D(\Om),
\end{equation}
where the right-hand side is the multiset union of the Neumann and Dirichlet
spectra of $\Om$.
\end{lemma}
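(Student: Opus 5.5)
We use the sheet-exchange involution $\sigma:X\to X$, $\sigma(x^\pm)=x^\mp$, which fixes the seam pointwise. Define $S F:=F\circ\sigma$, i.e. $S(f_+,f_-)=(f_-,f_+)$. Since $\sigma$ is a measure-preserving isometry of $X$, $S$ is a unitary involution on $L^2(X,\mm_X)$. By Lemma~\ref{lem:sobolev-double}, $S$ preserves $H^{1,2}(X)$: swapping the components keeps the trace-matching condition. It also satisfies $\EE_X(SF,SG)=\EE_X(F,G)$ by \eqref{eq:double-energy-form}. Hence $S$ commutes with $-\Delta_X$, because the form characterization of the Friedrichs operator is $S$-invariant. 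Therefore
$$L^2(X)=L^2_{\rm ev}\oplus L^2_{\rm odd},\qquad L^2_{\rm ev}=\{f_+=f_-\},\quad L^2_{\rm odd}=\{f_+=-f_-\},$$
is an orthogonal decomposition into reducing subspaces of $-\Delta_X$. The spectrum of $-\Delta_X$, with multiplicities, is then the multiset union of the spectra of the two restrictions. Both restrictions have compact resolvent, since $H^1(\Om)\hookrightarrow L^2(\Om)$ compactly for the Lipschitz domain $\Om$.

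\textbf{Even part.} The map $U_{\rm ev}:L^2(\Om)\to L^2_{\rm ev}$, $f\mapsto 2^{-1/2}(f,f)$, is unitary. Every $f\in H^1(\Om)$ yields $(f,f)\in H^{1,2}(X)$, since the traces trivially agree. Conversely, an even element of $H^{1,2}(X)$ has equal components in $H^1(\Om)$. So the restricted form domain is $U_{\rm ev}H^1(\Om)$. Moreover $\EE_X(U_{\rm ev}f,U_{\rm ev}g)=\int_\Om\nabla f\cdot\nabla g$, which is exactly the Neumann form. By uniqueness in the representation theorem, $U_{\rm ev}^{-1}(-\Delta_X|_{\rm ev})U_{\rm ev}$ is the Neumann Laplacian of $\Om$.

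\textbf{Odd part.} Use $U_{\rm odd}f=2^{-1/2}(f,-f)$. The pair $(f,-f)$ lies in $H^{1,2}(X)$ if and only if $\Tr f=-\Tr f$, i.e. $\Tr f=0$. Since $\ker\Tr=H^1_0(\Om)$ for Lipschitz domains, the restricted form domain is $U_{\rm odd}H^1_0(\Om)$. The form is again $\int_\Om\nabla f\cdot\nabla g$, so we obtain the Dirichlet Laplacian.

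The main point requiring care is the claim that the restriction of $-\Delta_X$ to each reducing subspace is the operator associated with the restricted form. We get this from the weak characterization. Take $F\in D(-\Delta_X)\cap L^2_{\rm ev}$. The identity $\EE_X(F,G)=\int HG$, tested against even $G$, yields the Neumann weak equation. Conversely, suppose $f$ is in the Neumann domain with $-\Delta_N f=h$. Then $F=(f,f)$ satisfies $\EE_X(F,G)=\int (h,h)\,G$ for all $G\in H^{1,2}(X)$. To see this, split $G=G_{\rm ev}+G_{\rm odd}$ using the $S$-invariance of $H^{1,2}(X)$. On the even part the identity is the Neumann equation. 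On the odd part both sides vanish by the symmetry of the form under $S$. The Dirichlet case is identical. Combining the two parts gives \eqref{eq:spectral-splitting} with multiplicities, since the eigenspaces are the direct sums of the even and odd eigenspaces.
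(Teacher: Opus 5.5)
Your proposal is correct and follows the paper's proof. The sheet-exchange involution splits the form and its Friedrichs operator into even and odd parts, and the unitary maps $f\mapsto 2^{-1/2}(f,\pm f)$ identify these with the Neumann form on $H^1(\Om)$ and (via $\ker\Tr=H_0^1(\Om)$) the Dirichlet form on $H_0^1(\Om)$; your explicit check that each restricted operator is the one associated with the restricted form, using the $\EE_X$-orthogonality of even and odd parts, fills in a step the paper states without detail.
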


\begin{proof}
Let $\tau:X\to X$ be the involution exchanging the two sheets.  Since $\tau$ is an
isometry preserving $\mm_X$, the induced unitary operator $UF=F\circ\tau$ on
$L^2(X)$ preserves the form domain \eqref{eq:sobolev-double-space} and the energy
form \eqref{eq:double-energy-form}.  Hence the form, and therefore the associated
self-adjoint operator, decomposes into the orthogonal direct sum of its even and
odd parts.

The maps
$$
f\longmapsto 2^{-1/2}(f,f),
\qquad
f\longmapsto 2^{-1/2}(f,-f)
$$
are unitary maps from $L^2(\Om)$ onto the even and odd subspaces of $L^2(X)$,
respectively.  On the even subspace, the form domain is $H^1(\Om)$ and the form
becomes the Neumann form $\int_\Om |\nabla f|^2\,dx$.  On the odd subspace,
trace compatibility gives $\Tr f=-\Tr f$ on $\partial\Om$, hence
$f\in H_0^1(\Om)$, and the form becomes the Dirichlet form
$\int_\Om |\nabla f|^2\,dx$ on $H_0^1(\Om)$.  The corresponding Friedrichs
operators are therefore unitarily equivalent to the Neumann and Dirichlet
Laplacians on $\Om$, which proves \eqref{eq:spectral-splitting}.
\end{proof}

Filonov's theorem~\cite[main theorem]{Filonov} is stated for Neumann eigenvalues
including the zero eigenvalue: for domains of finite measure in $\R^d$,
\begin{equation}\label{eq:filonov-inequality}
\mu_k^N(\Om)<\lambda_k^D(\Om)\qquad(k\ge1).
\end{equation}
In this indexing, our $\mu_1^N(\Om)$ is the second Neumann eigenvalue, namely the
first positive Neumann eigenvalue.

\begin{lemma}[Neumann below Dirichlet]\label{lem:ND-gap}
For every bounded connected planar Lipschitz domain $\Om$,
\begin{equation}\label{eq:ND-gap}
\mu_1^N(\Om)<\lambda_1^D(\Om).
\end{equation}
\end{lemma}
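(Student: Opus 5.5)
The plan is to read \eqref{ND-gap} directly off Filonov's inequality \eqref{eq:filonov-inequality}, taking care with the shift in indexing. The only real content is matching indices: Filonov indexes the Neumann eigenvalues starting from the zero eigenvalue, while the paper's convention starts at $\mu_0^N(\Om)=0$. Concretely, write Filonov's Neumann sequence as $\nu_1\le\nu_2\le\cdots$ and his Dirichlet sequence as $\lambda_1^D\le\lambda_2^D\le\cdots$. Then his theorem reads $\nu_{k+1}<\lambda_k^D(\Om)$ for $k\ge1$, and $\nu_{j+1}=\mu_j^N(\Om)$ in our indexing.

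The steps in order are as follows.

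\emph{Step 1: hypotheses.} A bounded connected Lipschitz domain $\Om\subset\R^2$ has finite measure, so Filonov's theorem applies.

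\emph{Step 2: the Neumann spectrum is discrete.} Because $\Om$ is bounded and Lipschitz, the embedding $H^1(\Om)\hookrightarrow L^2(\Om)$ is compact (Rellich). Hence the Neumann spectrum is discrete and the indexing makes sense.

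\emph{Step 3: the bottom of the spectrum.} Connectedness gives that $\nu_1=0$ is simple, with the constants as its eigenspace. Therefore $\nu_2=\mu_1^N(\Om)>0$ is the first positive Neumann eigenvalue.

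\emph{Step 4: conclusion.} Applying Filonov with $k=1$ gives $\mu_1^N(\Om)=\nu_2<\lambda_1^D(\Om)$, which is exactly \eqref{eq:ND-gap}.

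As an independent sanity check, I would note the classical Pólya–Payne–Weinberger style argument. Take $u$ to be the positive first Dirichlet eigenfunction, extended by zero to an element of $H^1(\Om)$. Then test the Neumann Rayleigh quotient on the two-dimensional space spanned by $1$ and $u$. Each $v=a+bu$ has Rayleigh quotient at most $\lambda_1^D$, and the inequality is strict unless $a=0$ or $\nabla u=0$ somewhere it cannot vanish. This only yields the non-strict inequality $\mu_1^N\le\lambda_1^D$ directly, which is why Filonov's strict result is invoked rather than reproduced.

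The only place that needs care is the index shift in Step 4; there is no analytic obstacle.
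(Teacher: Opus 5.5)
Your Steps 1--4 are correct and are exactly the paper's proof, which simply invokes the case $k=1$ of Filonov's strict inequality with the same index shift (the paper's $\mu_1^N(\Om)$ is Filonov's second Neumann eigenvalue). Your ``sanity check'' paragraph is false, though: on $\operatorname{span}\{1,u\}$ the function $v=u-|\Om|^{-1}\int_\Om u$ has Rayleigh quotient $\lambda_1^D\|u\|_{L^2}^2/\bigl(\|u\|_{L^2}^2-|\Om|^{-1}(\int_\Om u)^2\bigr)>\lambda_1^D$ since $\int_\Om u>0$, so that test space does not even give $\mu_1^N\le\lambda_1^D$ (an extra idea such as Filonov's test function $e^{i\omega\cdot x}$ with $|\omega|^2=\lambda_1^D$ is needed); your actual proof does not use this remark and is unaffected, so you should drop it.
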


\begin{proof}
This is exactly the case $k=1$ of inequality
\eqref{eq:filonov-inequality}.
\end{proof}

Note that \eqref{eq:ND-gap} holds for the polygon chosen above.

\begin{proposition}\label{prop:limit-spectrum}
The first positive eigenvalue of $X=D\Om$ is simple and equals $\mu_1^N(\Om)$.
A corresponding normalized eigenfunction is the even double
\begin{equation}\label{eq:even-double-eigenfunction}
\Phi(x^+)=\frac{1}{\sqrt2}\phi(x),
\qquad
\Phi(x^-)=\frac{1}{\sqrt2}\phi(x).
\end{equation}
Moreover $\Phi$ has at least $m$ isolated strict local maxima on $X$, located at
the seam points corresponding to $p_1,\ldots,p_m$.
\end{proposition}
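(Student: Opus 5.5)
The proof combines three earlier results: the spectral splitting of Lemma~\ref{lem:splitting}, the strict inequality of Lemma~\ref{lem:ND-gap}, and the hot-spot structure from Theorem~\ref{thm:miyamoto} and Lemma~\ref{lem:strict-neighborhoods-domain}.

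\emph{Identifying $\lambda_1(X)$.} The even--odd decomposition shows that the eigenvalue $0$ of $-\Delta_X$ comes only from the Neumann part. It is simple, because $\lambda_1^D(\Om)>0$ and $\mu_0^N(\Om)=0$ is simple for connected $\Om$. The next smallest element of the multiset $\Spec_N(\Om)\cup\Spec_D(\Om)$ is therefore $\min\{\mu_1^N(\Om),\lambda_1^D(\Om)\}$, which equals $\mu_1^N(\Om)$ by Lemma~\ref{lem:ND-gap}. The inequality is strict, so $\mu_1^N(\Om)$ does not occur in the Dirichlet (odd) part at all. Its multiplicity in $\Spec(-\Delta_X)$ is then exactly its Neumann multiplicity, which is one by Theorem~\ref{thm:miyamoto}.

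\emph{The eigenfunction.} Under the unitary map $f\mapsto 2^{-1/2}(f,f)$ of Lemma~\ref{lem:splitting}, the Neumann eigenfunction $\phi$ goes to $\Phi$ as in \eqref{eq:even-double-eigenfunction}. The normalization $\int_\Om\phi^2=1$ gives $\|\Phi\|_{L^2(X)}^2=2\cdot\frac12=1$.

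We can also check directly that $\Phi$ is an eigenfunction. The pair $(\phi,\phi)/\sqrt2$ has matching traces, so it lies in $H^{1,2}(X)$ by Lemma~\ref{lem:sobolev-double}. For $G=(g_+,g_-)\in H^{1,2}(X)$, formula \eqref{eq:double-energy-form} gives
\[
\EE_X(\Phi,G)=2^{-1/2}\int_\Om\nabla\phi\cdot\nabla(g_++g_-)\,dx .
\]
Since $g_++g_-\in H^1(\Om)$, the weak Neumann equation turns this into $\mu_1^N(\Om)\int_X\Phi G\,d\mm_X$. So $\Phi$ spans the one-dimensional eigenspace.

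\emph{Local maxima on $X$.} Here I expect the only real care to be needed, in passing between topologies on $\overline\Om$ and on $X$. $\Phi$ is continuous on $X$ because $\phi$ is continuous on $\overline\Om$ and both sheets carry the same boundary values.

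Take the sets $V_i$ from Lemma~\ref{lem:strict-neighborhoods-domain}. Let $\pi:X\to\overline\Om$ be the folding map $x^\pm\mapsto x$, which is continuous and $1$-Lipschitz. Then $DV_i=\pi^{-1}(V_i)$ is open in $X$, and the $DV_i$ are pairwise disjoint neighborhoods of the seam points $p_i$.

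On $DV_i$ we have $\Phi=2^{-1/2}\phi\circ\pi$. Since $p_i$ is a maximum of $\phi$ on $V_i$, and the unique one by \eqref{eq:Vi-meets-max-set}, its preimage $\pi^{-1}(p_i)$ is the single seam point. For $y\in DV_i\setminus\{p_i\}$ we have $\pi(y)\ne p_i$, hence $\Phi(y)<\Phi(p_i)$.

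This gives $m$ distinct strict local maxima. Each is isolated among critical or maximum points, because $\mathcal M_\phi\cap V_i=\{p_i\}$ and $\Phi$ attains its global maximum only on $\pi^{-1}(\mathcal M_\phi)$. The boundary gap \eqref{eq:boundary-gap-domain} carries over in the same way: $\Phi(p_i)\ge\sup_{\partial DV_i}\Phi+2\sqrt2\,\eta$, since $\partial_X DV_i\subset\pi^{-1}(\partial_{\overline\Om}V_i)$.
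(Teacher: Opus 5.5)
Your proof is correct and follows the paper's argument. The splitting lemma and Filonov's strict inequality identify $\lambda_1(X)=\mu_1^N(\Om)$ as a simple eigenvalue whose eigenfunction is the even lift $\Phi$. The sets $DV_i$ together with \eqref{eq:Vi-meets-max-set} then give $\Phi(y)<\Phi(p_i)$ on $DV_i\setminus\{p_i\}$. Your extra details (the explicit simplicity of $0$, the direct energy-form check, the folding map $\pi$ to see that $DV_i$ is open, and the transferred boundary gap) are all sound.

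One small caveat: your argument shows that each $p_i$ is isolated within the global maximum set of $\Phi$. That suffices here, since in the paper isolation just means separation by the disjoint $DV_i$. It does not show that $p_i$ is isolated among ``critical points'', a notion not even defined at the singular seam, so drop that phrase.
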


\begin{proof}
By Lemma~\ref{lem:splitting}, the first positive eigenvalue of the double is
\begin{equation}\label{eq:first-positive-double-min}
\lambda_1(X)=\min\{\mu_1^N(\Om),\lambda_1^D(\Om)\}.
\end{equation}
By Lemma~\ref{lem:ND-gap}, the minimum in \eqref{eq:first-positive-double-min} is
$\mu_1^N(\Om)$.  The eigenvalue is simple because $\mu_1^N(\Om)$ is simple by
Theorem~\ref{thm:miyamoto} and no odd Dirichlet eigenfunction occurs at that
level.

The formula \eqref{eq:even-double-eigenfunction} is the even lift of the
normalized Neumann eigenfunction $\phi$.  By \eqref{eq:phi-normalization}, this
doubled function has
$$
\int_X\Phi\,d\mm_X=0,
\qquad
\int_X\Phi^2\,d\mm_X=1.
$$
Since $\phi$ is continuous on $\overline\Om$ and the two sheet values in
\eqref{eq:even-double-eigenfunction} agree on the seam, $\Phi$ is continuous on
$X$.

Finally, a neighborhood of a seam point $p_i\in\partial\Om\subset X$ is the union
of two relative half-neighborhoods of $p_i$ in $\overline\Om$, one in each sheet.
Because $\Phi$ restricts to $2^{-1/2}\phi$ on both sheets,
Lemma~\ref{lem:strict-neighborhoods-domain} shows that the doubled neighborhoods
$DV_i$ are pairwise disjoint neighborhoods of the corresponding seam points.  By
\eqref{eq:Vi-meets-max-set}, one has
$$
\Phi(y)<\Phi(p_i)
\qquad\text{for all }y\in D V_i\setminus\{p_i\}.
$$
Thus each $p_i$ is a strict local maximum of $\Phi$ on $X$, and the pairwise
disjoint neighborhoods show that these $m$ strict local maxima are distinct and
isolated.
\end{proof}

\section{Smooth positively curved pancake approximation}\label{sec:pancakes}

We now approximate the singular doubled polygon $X=D\Om$ by smooth embedded
strictly convex surfaces in $\R^3$.

\subsection{Smooth inner approximations of the polygon}

In this subsection, $d_H^{\R^2}$ denotes Euclidean Hausdorff distance in the
plane, and $|A|$ denotes planar Lebesgue measure for measurable sets
$A\subset\R^2$.

Write the convex polygon as
\begin{equation}\label{eq:polygon-halfspace}
\Om=\bigcap_{\alpha=1}^{L}\{x\in\R^2:\ell_\alpha(x)\le0\},
\end{equation}
where $L$ is the number of defining half-spaces, each $\ell_\alpha$ is affine
and nonconstant, and where the outward normals $d\ell_\alpha$ span $\R^2$.  For
$\delta>0$ set
\begin{equation}\label{eq:psi-delta-and-omega-delta}
\psi_\delta(x)=\sum_{\alpha=1}^{L} \exp(\ell_\alpha(x)/\delta),
\qquad
\Om_\delta=\{x\in\R^2:\psi_\delta(x)\le1\}.
\end{equation}
For all sufficiently small $\delta$, $\Om_\delta$ has non-empty interior.

\begin{lemma}\label{lem:Omega-delta}
For all sufficiently small $\delta>0$, $\Om_\delta$ is a bounded smooth strictly
convex domain, $\Om_\delta\subset\Om$, and
\begin{equation}\label{eq:omega-delta-convergence}
d_H^{\R^2}(\overline{\Om_\delta},\overline\Om)\to0,
\qquad
d_H^{\R^2}(\partial\Om_\delta,\partial\Om)\to0,
\qquad
|\Om\setminus\Om_\delta|\to0
\quad\text{as }\delta\downarrow0.
\end{equation}
\end{lemma}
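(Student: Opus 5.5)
The plan is to read off every property from elementary facts about $\psi_\delta$: it is a sum of exponentials of affine functions, hence smooth and convex, and it is squeezed between $\max_\alpha e^{\ell_\alpha/\delta}$ and $L\max_\alpha e^{\ell_\alpha/\delta}$.

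\emph{Inclusion and boundedness.} If $\psi_\delta(x)\le1$, then each term satisfies $e^{\ell_\alpha(x)/\delta}<1$, because all $L\ge 2$ terms are positive. Hence $\ell_\alpha(x)<0$ for every $\alpha$, so $\Om_\delta\subset\operatorname{int}\Om\subset\Om$. In particular $\Om_\delta$ is bounded, since $\Om$ is.

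\emph{Smoothness and strict convexity.} Compute
\[
\nabla\psi_\delta=\delta^{-1}\sum_\alpha e^{\ell_\alpha/\delta}\,d\ell_\alpha,
\qquad
D^2\psi_\delta=\delta^{-2}\sum_\alpha e^{\ell_\alpha/\delta}\,d\ell_\alpha\otimes d\ell_\alpha .
\]
The Hessian is positive definite everywhere, because the outward normals $d\ell_\alpha$ span $\R^2$ and all coefficients are positive. So $\psi_\delta$ is strictly convex and $\Om_\delta$ is a convex sublevel set.

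The level $1$ is a regular value once $\Om_\delta$ has interior. Pick an interior point $x_0$ with $\psi_\delta(x_0)<1$. At a boundary point $x$, convexity gives
\[
\langle\nabla\psi_\delta(x),x-x_0\rangle\ge\psi_\delta(x)-\psi_\delta(x_0)>0,
\]
so $\nabla\psi_\delta(x)\neq0$. Hence $\partial\Om_\delta=\{\psi_\delta=1\}$ is a smooth closed curve. Its curvature is
\[
\langle D^2\psi_\delta\,T,T\rangle/|\nabla\psi_\delta|>0
\]
for the unit tangent $T$, so $\Om_\delta$ is a smooth strictly convex domain.

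\emph{Convergence.} Fix a point $c$ in the interior of $\Om$ and shrink toward it: set $\Om^{(s)}=c+(1-s)(\Om-c)$. On $\Om^{(s)}$ one has $\ell_\alpha\le -s\,\kappa$ for every $\alpha$, where $\kappa=\min_\alpha(-\ell_\alpha(c))>0$. This is because $\ell_\alpha$ is affine, $\ell_\alpha\le0$ on $\Om$, and $\ell_\alpha(c)\le-\kappa$. Therefore
\[
\psi_\delta\le L\,e^{-s\kappa/\delta}\quad\text{on }\Om^{(s)}.
\]
Choose $s(\delta)=\delta\log(L)/\kappa+\delta$. This gives $\psi_\delta<1$ on $\Om^{(s(\delta))}$, and also shows that $\Om_\delta$ has non-empty interior for small $\delta$. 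Hence
\[
\Om^{(s(\delta))}\subset\Om_\delta\subset\Om,
\qquad s(\delta)\to0 .
\]
Since $d_H^{\R^2}(\Om^{(s)},\Om)\le s\,\diam\Om$ and $|\Om\setminus\Om^{(s)}|=(1-(1-s)^2)|\Om|$, the first and third statements in \eqref{eq:omega-delta-convergence} follow.

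For the boundaries, nested convex sets bring extra control. If $A\subset B$ are compact convex sets with $d_H^{\R^2}(A,B)\le r$, then $d_H^{\R^2}(\partial A,\partial B)\le r$ as well. Applying this with $A=\overline{\Om_\delta}$ and $B=\overline\Om$ gives the second statement. To justify the fact, take $y\in\partial A$ and a supporting line of $A$ at $y$. The ray from $y$ along the outward normal to that line meets $\partial B$ at some point $z$. Every point of the ray beyond $y$ lies at distance equal to its distance from $y$ from $A$, so $z\in B$ forces $|z-y|\le r$. Conversely, a point $z\in\partial B$ lies within $r$ of some point of $A$. Its nearest point in $A$ must lie on $\partial A$, unless $z$ itself is in the interior of $A$, which is impossible since $\operatorname{int}A\subset\operatorname{int}B$.

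The step I expect to need the most care is the nondegeneracy of $\nabla\psi_\delta$ on $\{\psi_\delta=1\}$, together with the boundary Hausdorff convergence. Both are handled by the convexity arguments above.
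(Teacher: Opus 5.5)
Your proof is correct, and its skeleton matches the paper's. Both arguments use a positive-definite Hessian for strict convexity, a convexity argument for regularity of $\{\psi_\delta=1\}$, and the termwise bound for $\Om_\delta\subset\Om$. Both also use a sandwich $K_\delta\subset\Om_\delta\subset\Om$ with $K_\delta\to\Om$. The differences are in the convergence part.

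\emph{Inner approximant.} The paper takes $K_\delta$ to be the inner parallel set $\Om^{-r_\delta}$, which needs the distance-to-line bound $\ell_\alpha\le -C_0 r$. You use the homothetic copy $\Om^{(s)}=c+(1-s)(\Om-c)$ instead. There $\ell_\alpha\le -s\kappa$ follows in one line from affinity. The area defect $(1-(1-s)^2)|\Om|$ is then explicit, rather than a boundary-layer estimate.

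\emph{Boundary convergence.} This is the more substantive difference. For $d_H^{\R^2}(\partial\Om_\delta,\partial\Om)\to0$ the paper argues by hand in both directions: points of $\partial\Om_\delta$ cannot lie in $\Om^{-r_\delta}$, and for $p\in\partial\Om$ the segment from $p$ toward $x_*$ crosses $\partial\Om_\delta$ within $2\diam(\Om)\,r_\delta/s_*$. You instead prove a general fact with a supporting-line ray: nested compact convex bodies $A\subset B$ satisfy $d_H(\partial A,\partial B)\le d_H(A,B)$. This reduces the boundary statement to the set statement with no loss of constants.

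Your route is shorter and more conceptual, and it relies on convexity of the inner set $\Om_\delta$. The paper's route uses only the half-plane description of $\Om$ and the identification $\partial\Om_\delta=\{\psi_\delta=1\}$, at the cost of a worse constant.
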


\begin{proof}
Choose $x_*\in\Om^\circ$.  Since $\ell_\alpha(x_*)<0$ for all $\alpha$, one has
$\psi_\delta(x_*)<1$ for all sufficiently small $\delta$.  Thus $\Om_\delta$ has
non-empty interior.

The Hessian is
\begin{equation}\label{eq:hessian-psi-delta}
D^2\psi_\delta(x)=\frac{1}{\delta^2}\sum_{\alpha=1}^{L}
\exp(\ell_\alpha(x)/\delta)\,d\ell_\alpha\otimes d\ell_\alpha .
\end{equation}
Because the covectors $d\ell_\alpha$ span $\R^2$ and the coefficients in
\eqref{eq:hessian-psi-delta} are positive, $D^2\psi_\delta$ is positive definite
at every point.  Thus $\psi_\delta$ is smooth and strictly convex, and every
sublevel set $\{\psi_\delta\le c\}$ is strictly convex.  On the level set
$\psi_\delta=1$, the gradient cannot vanish: if $\nabla\psi_\delta=0$ at a point
of that level set, strict convexity would make that point the unique global
minimizer, whereas the existence of $x_*$ with $\psi_\delta(x_*)<1$ shows that
the minimum is strictly less than $1$.  Therefore $\partial\Om_\delta$ is smooth.
Since $\Om_\delta\subset\Om$ will be shown below and $\Om$ is bounded,
$\Om_\delta$ is bounded.

If $x\in\Om_\delta$, then each exponential term in \eqref{eq:psi-delta-and-omega-delta}
is at most $1$, hence each $\ell_\alpha(x)\le0$, so $x\in\Om$ by
\eqref{eq:polygon-halfspace}.  Thus $\Om_\delta\subset\Om$.

It remains to prove Hausdorff convergence.  Put
$$
C_0:=\min_{1\le\alpha\le L}|d\ell_\alpha|>0.
$$
If $x\in\Om$ and $\dist_{\R^2}(x,\partial\Om)\ge r$, then, for each $\alpha$, the exterior
half-space $\{\ell_\alpha>0\}$ is contained in $\R^2\setminus\Om$.  Hence
$$
\frac{-\ell_\alpha(x)}{|d\ell_\alpha|}
=\dist_{\R^2}(x,\{\ell_\alpha=0\})
\ge \dist_{\R^2}(x,\partial\Om)
\ge r.
$$
Consequently, for the inner parallel set
$$
\Om^{-r}:=\{x\in\Om:\dist_{\R^2}(x,\partial\Om)\ge r\},
$$
one has the linear error bound
\begin{equation}\label{eq:linear-error-bound}
\ell_\alpha(x)\le -C_0 r
\quad\text{for all }x\in\Om^{-r},\;\alpha=1,\ldots,L.
\end{equation}
Choose
\begin{equation}\label{eq:r-delta}
r_\delta=\frac{2\delta}{C_0}\log L.
\end{equation}
Then, for $x\in\Om^{-r_\delta}$, equations \eqref{eq:linear-error-bound} and
\eqref{eq:r-delta} give
$$
\psi_\delta(x)\le L\exp(-C_0r_\delta/\delta)=L^{-1}<1.
$$
Hence
\begin{equation}\label{eq:inner-outer-inclusion}
\Om^{-r_\delta}\subset\Om_\delta\subset\Om.
\end{equation}
Since $r_\delta\to0$, \eqref{eq:inner-outer-inclusion} implies
$d_H^{\R^2}(\overline{\Om_\delta},\overline\Om)\to0$.

We now prove boundary convergence.  Every point of $\partial\Om_\delta$ lies
within distance $r_\delta$ of $\partial\Om$, otherwise it would belong to
$\Om^{-r_\delta}\subset\{\psi_\delta<1\}$ by \eqref{eq:inner-outer-inclusion}.

Conversely, put
$$
s_*:=\min_{1\le\alpha\le L}\frac{-\ell_\alpha(x_*)}{|d\ell_\alpha|}>0.
$$
For $p\in\partial\Om$ define
$$
t_\delta=\frac{2r_\delta}{s_*},
\qquad
q_{\delta,p}:=(1-t_\delta)p+t_\delta x_* .
$$
For all sufficiently small $\delta$, $0<t_\delta<1$.  By the half-space
inequalities,
$$
\ell_\alpha(q_{\delta,p})
=(1-t_\delta)\ell_\alpha(p)+t_\delta\ell_\alpha(x_*)
\le t_\delta\ell_\alpha(x_*)
\le -2r_\delta |d\ell_\alpha|
$$
for every $\alpha$.  Thus $\dist_{\R^2}(q_{\delta,p},\partial\Om)\ge2r_\delta$, and in
particular $q_{\delta,p}\in\Om^{-r_\delta}\subset\{\psi_\delta<1\}$.  On the
other hand, since $p\in\partial\Om$, at least one defining inequality satisfies
$\ell_\alpha(p)=0$, hence $\psi_\delta(p)\ge1$.  By continuity, the segment
$[p,q_{\delta,p}]$ meets $\partial\Om_\delta$ at some point $b_{\delta,p}$.  The
estimate
$$
|b_{\delta,p}-p|\le |q_{\delta,p}-p|
=t_\delta |x_*-p|
\le \frac{2\diam(\Om)}{s_*}\,r_\delta
$$
is uniform in $p$.  This proves $d_H^{\R^2}(\partial\Om_\delta,\partial\Om)\to0$.
Finally, $\Om\setminus\Om_\delta$ is contained in the boundary layer
$\Om\setminus\Om^{-r_\delta}$, whose Lebesgue measure tends to $0$ as
$r_\delta\downarrow0$.  This completes the proof of \eqref{eq:omega-delta-convergence}.
\end{proof}

\begin{lemma}\label{lem:doubles-converge}
The metric-measure doubles satisfy
\begin{equation}\label{eq:doubles-converge}
(D\Om_\delta,d_{D\Om_\delta},\mm_{D\Om_\delta})
\longrightarrow
(D\Om,d_{D\Om},\mm_{D\Om})
\end{equation}
in measured Gromov--Hausdorff sense as $\delta\downarrow0$.
\end{lemma}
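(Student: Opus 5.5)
Since $\Om_\delta\subset\Om$ and the inclusion is close to surjective, the natural plan is to build explicit $\eps$-isometries $D\Om\to D\Om_\delta$ and check measure convergence by hand.

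\textbf{The comparison map.} Fix $x_*\in\Om^\circ$ as in the proof of Lemma~\ref{lem:Omega-delta}. Let $\rho_\delta:\overline\Om\to\overline{\Om_\delta}$ be the nearest-point projection onto the closed convex set $\overline{\Om_\delta}$. It is $1$-Lipschitz, restricts to the identity on $\overline{\Om_\delta}$, and satisfies
\[
|\rho_\delta(x)-x|\le d_H^{\R^2}(\overline{\Om_\delta},\overline\Om)=:h_\delta\to0
\]
by \eqref{eq:omega-delta-convergence}. Points of $\partial\Om$ are sent to $\partial\Om_\delta$, because $\Om\setminus\overline{\Om_\delta}$ projects onto $\partial\Om_\delta$. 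Hence $\rho_\delta$ induces a well-defined map $F_\delta:D\Om\to D\Om_\delta$ by $x^\pm\mapsto \rho_\delta(x)^\pm$. Its image contains all of $D\Om_\delta$, so it is surjective.

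\textbf{Distortion estimate.} For a convex domain $U$, the double distance has the explicit form
\[
d_{DU}(x^\pm,y^\pm)=|x-y|,\qquad d_{DU}(x^+,y^-)=\min_{q\in\partial U}\bigl(|x-q|+|q-y|\bigr).
\]
The same-sheet formula holds because straight segments stay in $U$. For opposite sheets, any path must cross the seam, and one can reduce to a single crossing since extra crossings do not shorten the path. Same-sheet distances are then distorted by at most $2h_\delta$. For opposite sheets I compare the minima over $q$ in $\partial\Om$ and in $\partial\Om_\delta$. Given $q\in\partial\Om$, use $\rho_\delta(q)\in\partial\Om_\delta$; conversely, given $q'\in\partial\Om_\delta$, use a point of $\partial\Om$ within $d_H(\partial\Om_\delta,\partial\Om)$ of it. Both errors are $O(h_\delta+d_H(\partial\Om_\delta,\partial\Om))$. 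So $F_\delta$ is an $\eps_\delta$-isometry with $\eps_\delta\to0$, which gives Gromov--Hausdorff convergence.

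\textbf{Measure convergence.} On each sheet $\mm$ is Lebesgue measure, since the seam is $\HH^2$-null. I would show $(F_\delta)_\#\mm_{D\Om}\rightharpoonup\mm_{D\Om_\delta}$ in the sense needed for mGH convergence. Concretely, for $f$ Lipschitz on the limit, I must compare $\int_{D\Om_\delta}f\circ(\text{approx})$ with $\int_{D\Om}f$. The cleanest route is to use the map $G_\delta:D\Om_\delta\hookrightarrow D\Om$ induced by the inclusion. It is also an $\eps_\delta$-isometry, by the same estimate with roles swapped: it is an isometry on sheets and distorts cross-sheet distances by $O(d_H(\partial\Om_\delta,\partial\Om))$, and its image is $\eps_\delta$-dense. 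Then $(G_\delta)_\#\mm_{D\Om_\delta}=\mm_{D\Om}\llcorner D\Om_\delta$. This differs from $\mm_{D\Om}$ by total variation $2|\Om\setminus\Om_\delta|\to0$, by \eqref{eq:omega-delta-convergence}. Hence $\int f\,d(G_\delta)_\#\mm_{D\Om_\delta}\to\int f\,d\mm_{D\Om}$ for every bounded continuous $f$, which is measured GH convergence.

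\textbf{Main obstacle.} The expected difficulty is the cross-seam distance: justifying the one-crossing formula for $d_{DU}$ and proving the uniform stability of $\min_q(|x-q|+|q-y|)$ under Hausdorff perturbation of the boundary. I would handle it by proving the two-sided inequality via explicit $\partial U$ points as above, using only \eqref{eq:omega-delta-convergence}.
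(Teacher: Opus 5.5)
Your route is essentially the paper's: the same explicit distance formulas on doubles of convex sets, the same two-sided comparison of $\min_q(|x-q|+|q-y|)$ under Hausdorff perturbation of the boundary, and the same measure argument. That measure argument runs through a map $D\Om_\delta\to D\Om$ whose push-forward is two copies of Lebesgue measure on $\Om_\delta$. The convention of Theorem~\ref{thm:honda} needs approximation maps in the direction $D\Om_\delta\to D\Om$, so your nearest-point map $F_\delta$ ends up unused; only $G_\delta$ matters.

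One step needs repair: the inclusion does not induce a well-defined $G_\delta$ on the seam.
\begin{itemize}
\item Since $\psi_\delta>1$ on $\partial\Om$, one has $\overline{\Om_\delta}\subset\Om^\circ$.
\item So a seam point $a\in\partial\Om_\delta$ corresponds to two distinct points $a^+\ne a^-$ of $D\Om$, at distance $2\dist(a,\partial\Om)>0$.
\item You must choose an image. The paper sends $a$ to a nearby seam point $Q_\delta(a)\in\partial\Om$ via a Borel selection; sending it to $a^+$ also works.
\item After this choice, $G_\delta$ is no longer an isometry on sheets. One has $d_{D\Om_\delta}(a,y^-)=|a-y|$, whereas $d_{D\Om}(a^+,y^-)$ can exceed this by up to $2\dist(a,\partial\Om)\le 2\,d_H^{\R^2}(\partial\Om_\delta,\partial\Om)$.
\end{itemize}
This extra distortion tends to zero, and the seam is $\mm$-null. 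So your conclusion stands once the seam choice is made explicit and this term is added to the distortion estimate.
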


\begin{proof}
For any compact convex set $C\subset\R^2$ with non-empty interior, distances in
the metric double $DC$ are given by the following formulas, where $|\cdot|$
denotes the Euclidean norm in $\R^2$:
\begin{subequations}
\begin{align}
d_{DC}(x^\sigma,y^\sigma)&=|x-y|,
\qquad \sigma\in\{+,-\},\label{eq:double-distance-same-sheet}\\
d_{DC}(x^+,y^-)&=\inf_{q\in\partial C}\bigl(|x-q|+|q-y|\bigr).\label{eq:double-distance-opposite-sheet}
\end{align}
\end{subequations}
Formula \eqref{eq:double-distance-same-sheet} follows because projection to the
plane is length non-increasing and the straight segment in a convex sheet realizes
$|x-y|$.  Formula \eqref{eq:double-distance-opposite-sheet} follows because any
curve from one sheet to the other must cross the seam, while the broken segment
$xq\cup qy$ gives the displayed upper bound.

Let
$$
a_\delta=d_H^{\R^2}(\overline{\Om_\delta},\overline\Om),
\qquad
b_\delta=d_H^{\R^2}(\partial\Om_\delta,\partial\Om),
$$
so $a_\delta,b_\delta\to0$ by Lemma~\ref{lem:Omega-delta}.  Choose a Borel map
$Q_\delta:\partial\Om_\delta\to\partial\Om$ with
\begin{equation}\label{eq:Q-delta-bound}
|Q_\delta(a)-a|\le b_\delta+\delta
\qquad(a\in\partial\Om_\delta).
\end{equation}
Define a seam-compatible map
$$
\mathcal F_\delta:D\Om_\delta\to D\Om
$$
as follows: if $x\in\Om_\delta^\circ$, set $\mathcal F_\delta(x^\pm)=x^\pm$; if
$a\in\partial\Om_\delta$ is a seam point, set $\mathcal F_\delta(a)=Q_\delta(a)$, viewed
as a seam point of $D\Om$.  This map need not be continuous near the seam, but
continuity is not required for a Gromov--Hausdorff approximation; only distortion
and density are used.

The image $\mathcal F_\delta(D\Om_\delta)$ is $(a_\delta+b_\delta+
\delta)$-dense in $D\Om$.  Indeed, points in $\Om\setminus\Om_\delta$ are within
$a_\delta$ of $\overline{\Om_\delta}$ on the corresponding sheet, and if the
nearest point is on $\partial\Om_\delta$, its image is changed by at most
$b_\delta+\delta$ by \eqref{eq:Q-delta-bound}.

We next estimate distortion.  Same-sheet distances between interior points are
preserved exactly.  If one or both points are seam points, replacing
$a\in\partial\Om_\delta$ by $Q_\delta(a)$ changes the relevant Euclidean length
by at most $b_\delta+\delta$ per seam point, again by \eqref{eq:Q-delta-bound}.
Hence same-sheet distances are changed by at most $2(b_\delta+\delta)$.

For opposite-sheet distances, use \eqref{eq:double-distance-opposite-sheet}.  For
$x,y\in\overline{\Om_\delta}$,
$$
\left|
\inf_{q\in\partial\Om_\delta}(|x-q|+|q-y|)
-
\inf_{q\in\partial\Om}(|x-q|+|q-y|)
\right|
\le 2b_\delta.
$$
Indeed, by the definition of Hausdorff distance, for every
$q\in\partial\Om_\delta$ there is $q'\in\partial\Om$ with
$|q-q'|\le b_\delta$, and conversely.  The triangle inequality gives
$|x-q'|+|q'-y|\le |x-q|+|q-y|+2b_\delta$ and the reverse comparison in the same
way.  If $x$ or $y$ itself is a seam point, the additional replacement by $Q_\delta$
changes the two Euclidean terms by at most $2(b_\delta+\delta)$.  Therefore
\begin{equation}\label{eq:calF-delta-distortion}
\sup_{p,q\in D\Om_\delta}
\bigl|d_{D\Om}(\mathcal F_\delta p,\mathcal F_\delta q)-d_{D\Om_\delta}(p,q)\bigr|
\le 4b_\delta+2\delta.
\end{equation}
The density estimate and \eqref{eq:calF-delta-distortion} show that $\mathcal F_\delta$ is a
Gromov--Hausdorff approximation.

For the measures, $\mm_{D\Om_\delta}$ is the sum of two copies of planar
Lebesgue measure on $\Om_\delta$, and the seam has zero two-dimensional measure.
The push-forward $(\mathcal F_\delta)_\#\mm_{D\Om_\delta}$ is therefore the sum of two
copies of Lebesgue measure restricted to $\Om_\delta\subset\Om$, viewed inside
the corresponding sheets of $D\Om$.  Since $\chi_{\Om_\delta}\to\chi_\Om$ in
$L^1(\R^2)$ by Lemma~\ref{lem:Omega-delta}, these measures converge weakly to
$\mm_{D\Om}$.  This proves \eqref{eq:doubles-converge}.
\end{proof}

\subsection{Elliptic pancakes over a smooth convex base}

Fix $\delta>0$ small.  For $\eps>0$, define
\begin{equation}\label{eq:pancake-body-and-surface}
C_{\eps,\delta}:=\left\{(x,z)\in\R^2\times\R:
\psi_\delta(x)+\frac{z^2}{\eps^2}\le1\right\},
\qquad
M_{\eps,\delta}:=\partial C_{\eps,\delta}.
\end{equation}

\begin{figure}[htbp]
\centering
\begin{tikzpicture}[
  >=Latex,
  every node/.style={font=\small},
  line cap=round,
  line join=round
]

\tikzset{
  polygonstyle/.style={draw=orange!75!black, thick, fill=orange!10},
  smoothstyle/.style={draw=blue!65!black, thick, fill=blue!8},
  surfacestyle/.style={draw=green!40!black, thick}
}

\begin{scope}[shift={(0,0)}]
  \coordinate (P1) at (-1.35,-0.42);
  \coordinate (P2) at (-1.05, 0.62);
  \coordinate (P3) at (-0.10, 0.88);
  \coordinate (P4) at ( 0.92, 0.58);
  \coordinate (P5) at ( 1.35,-0.12);
  \coordinate (P6) at ( 0.68,-0.72);
  \coordinate (P7) at (-0.55,-0.78);

  \filldraw[polygonstyle]
    (P1)--(P2)--(P3)--(P4)--(P5)--(P6)--(P7)--cycle;

  \foreach \P in {P1,P2,P3,P4,P5,P6,P7}{
    \fill[orange!75!black] (\P) circle (1.1pt);
  }

  \node[align=center, text width=3.4cm] at (0,-1.38)
    {convex polygon\\[-1mm]$\Om=\bigcap_\alpha\{\ell_\alpha\le0\}$};
\end{scope}

\draw[->, thick] (1.75,0.03) -- (3.15,0.03);
\node[align=center] at (2.45,0.48)
  {smooth inner\\approximation};

\begin{scope}[shift={(4.65,0)}]
  \coordinate (Q1) at (-1.35,-0.42);
  \coordinate (Q2) at (-1.05, 0.62);
  \coordinate (Q3) at (-0.10, 0.88);
  \coordinate (Q4) at ( 0.92, 0.58);
  \coordinate (Q5) at ( 1.35,-0.12);
  \coordinate (Q6) at ( 0.68,-0.72);
  \coordinate (Q7) at (-0.55,-0.78);

  \draw[gray!55, dashed]
    (Q1)--(Q2)--(Q3)--(Q4)--(Q5)--(Q6)--(Q7)--cycle;

  \filldraw[smoothstyle]
    (-1.08,-0.30)
    .. controls (-1.02, 0.38) and (-0.62, 0.66) .. (-0.05, 0.70)
    .. controls ( 0.55, 0.74) and ( 1.00, 0.38) .. ( 1.08,-0.10)
    .. controls ( 1.14,-0.50) and ( 0.58,-0.64) .. ( 0.05,-0.65)
    .. controls (-0.62,-0.66) and (-1.04,-0.55) .. (-1.08,-0.30)
    -- cycle;

  \node[blue!65!black] at (0,0.03) {$\Om_\delta$};
  \node[gray!65!black] at (0,1.05) {$\Om_\delta\subset\Om$};

  \node[align=center, text width=3.4cm] at (0,-1.38)
    {smooth strictly\\convex domain\\[-1mm]$\Om_\delta=\{\psi_\delta\le1\}$};
\end{scope}

\draw[->, thick] (6.35,0.03) -- (7.95,0.03);
\node[align=center] at (7.15,0.48)
  {elliptic\\thickening};

\begin{scope}[shift={(9.65,0)}]
  \shade[top color=green!8, bottom color=gray!22]
    (-1.55,0)
    .. controls (-1.14, 0.47) and ( 1.14, 0.47) .. ( 1.55,0)
    .. controls ( 1.14,-0.47) and (-1.14,-0.47) .. (-1.55,0)
    -- cycle;

  \draw[surfacestyle]
    (-1.55,0)
    .. controls (-1.14, 0.47) and ( 1.14, 0.47) .. ( 1.55,0)
    .. controls ( 1.14,-0.47) and (-1.14,-0.47) .. (-1.55,0)
    -- cycle;

  \draw[gray!65!black, dashed]
    (-1.28,0.02) .. controls (-0.50,-0.15) and (0.50,-0.15) .. (1.28,0.02);
  \draw[gray!65!black]
    (-1.28,0.02) .. controls (-0.50, 0.14) and (0.50, 0.14) .. (1.28,0.02);

  \draw[red!70!black, <->, thick] (1.82,-0.48) -- (1.82,0.48);
  \node[red!70!black, right] at (1.84,0) {$O(\eps)$};

  \node[green!35!black] at (0,0.63) {$M_{\eps,\delta}$};

  \node[align=center, text width=3.4cm] at (0,-1.38)
    {smooth positively\\curved pancake\\[-1mm]$M_{\eps,\delta}=\partial C_{\eps,\delta}$};
\end{scope}

\end{tikzpicture}
\caption{Schematic of the pancake approximation.  A convex polygon $\Om$ is first
replaced by smooth strictly convex inner domains $\Om_\delta$, and then by thin
strictly convex surfaces $M_{\eps,\delta}=\partial C_{\eps,\delta}$ with positive
Gaussian curvature.}
\label{fig:pancake-approximation-schematic}
\end{figure}

\begin{lemma}\label{lem:positive-curvature}
For every sufficiently small $\delta>0$ and every $\eps>0$,
$M_{\eps,\delta}$ is a smooth embedded two-sphere with positive Gaussian
curvature.
\end{lemma}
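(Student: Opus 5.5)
We would realize $M_{\eps,\delta}$ as a regular level set of the single smooth function
$$
F(x,z):=\psi_\delta(x)+\frac{z^2}{\eps^2},\qquad (x,z)\in\R^2\times\R,
$$
so that $C_{\eps,\delta}=\{F\le1\}$ and $M_{\eps,\delta}=\{F=1\}$. We then read off positivity of the Gauss curvature from strict convexity of $F$. Throughout, $\delta$ is taken small enough that Lemma~\ref{lem:Omega-delta} applies; in particular $\min\psi_\delta<1$ and $\{\psi_\delta\le1\}=\overline{\Om_\delta}$ is compact.

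\emph{Step 1: strict convexity of $F$.} By \eqref{eq:hessian-psi-delta},
$$
D^2F=\begin{pmatrix} D^2\psi_\delta & 0\\ 0 & 2\eps^{-2}\end{pmatrix}.
$$
Since $D^2\psi_\delta$ is positive definite, $D^2F$ is positive definite everywhere, so $F$ is smooth and strictly convex on $\R^3$.

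\emph{Step 2: $C_{\eps,\delta}$ is a compact convex body and $1$ is a regular value.} The set $C_{\eps,\delta}$ is compact because it is contained in $\overline{\Om_\delta}\times[-\eps,\eps]$. It has nonempty interior, since it contains $(x_*,0)$ with $\psi_\delta(x_*)<1$, so $F(x_*,0)<1$. We have
$$
\nabla F=\bigl(\nabla\psi_\delta,\,2z/\eps^2\bigr).
$$
If $\nabla F$ vanished at a point with $F=1$, that point would be the unique global minimizer of the strictly convex function $F$, forcing $\min F=1$. This contradicts $F(x_*,0)<1$, exactly as in the proof of Lemma~\ref{lem:Omega-delta}. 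Hence $M_{\eps,\delta}$ is a smooth compact embedded surface, and it bounds the compact convex body $C_{\eps,\delta}$ with nonempty interior.

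\emph{Step 3: topology.} The boundary of a compact convex body in $\R^3$ with interior point $(x_*,0)$ is homeomorphic to $\mathbb S^2$ via radial projection from $(x_*,0)$. To get a diffeomorphism, we show that each ray from $(x_*,0)$ meets $M_{\eps,\delta}$ exactly once and transversally. Along a ray $t\mapsto(x_*,0)+t\omega$ with $t>0$, the function $t\mapsto F$ is strictly convex and has value $<1$ at $t=0$. It is also unbounded as $t\to\infty$, because $C_{\eps,\delta}$ is compact. So it crosses the value $1$ exactly once, and there its derivative is positive. By the implicit function theorem, the radial map $\mathbb S^2\to M_{\eps,\delta}$ is a smooth bijection with smooth inverse, hence a diffeomorphism.

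\emph{Step 4: curvature.} For a regular level set $\{F=1\}$ with unit normal $\nu=\nabla F/|\nabla F|$, the second fundamental form on the tangent plane $T=\nabla F^\perp$ is
$$
\mathrm{II}=\frac{D^2F|_{T}}{|\nabla F|}.
$$
This restriction is positive definite by Step 1. The Gauss curvature is the determinant of $\mathrm{II}$ relative to the induced metric, so $K>0$ everywhere on $M_{\eps,\delta}$.

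The only point requiring care is the regularity and nondegeneracy near the ``rim'' $z=0$, $x\in\partial\Om_\delta$. There the graph description $z=\pm\eps\sqrt{1-\psi_\delta(x)}$ breaks down because the square root is not smooth at zero. The level-set formulation sidesteps this entirely: at rim points $\nabla F=(\nabla\psi_\delta,0)\neq0$, since $\nabla\psi_\delta\ne0$ on $\partial\Om_\delta$ by Lemma~\ref{lem:Omega-delta}. So no separate analysis of the rim is needed.
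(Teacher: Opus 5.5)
Your proposal is correct and follows the paper's route: you realize $M_{\eps,\delta}$ as the level set $\{F=1\}$ of the strictly convex function $F$, rule out critical points on that level via the global-minimizer argument against $F(x_*,0)<1$, and read off $K>0$ from the positive definite $\mathrm{II}=D^2F|_T/|\nabla F|$. The one difference is the topology step: you construct the diffeomorphism with $\mathbb S^2$ explicitly by radial projection from $(x_*,0)$, using the implicit function theorem, whereas the paper simply cites the smooth classification of surfaces or the outward Gauss map.
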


\begin{proof}
Let
$$
G_{\eps,\delta}(x,z)=\psi_\delta(x)+\frac{z^2}{\eps^2}.
$$
Then
\begin{equation}\label{eq:hessian-G-eps-delta}
D^2G_{\eps,\delta}=\begin{pmatrix}
D^2\psi_\delta&0\\[2mm]
0&2\eps^{-2}
\end{pmatrix}>0.
\end{equation}
The level set $G_{\eps,\delta}=1$ is regular.  Indeed, if
$\nabla G_{\eps,\delta}=0$ on that level set, strict convexity would place a
global minimizer on the boundary of a sublevel set whose interior is non-empty,
impossible.  Hence $M_{\eps,\delta}$ is smooth.  It is compact and bounds the
strictly convex body $C_{\eps,\delta}$ from \eqref{eq:pancake-body-and-surface},
so it is an embedded topological sphere.  Since it is a smooth embedded closed
surface homeomorphic to $\mathbb S^2$, it is diffeomorphic to $\mathbb S^2$; equivalently,
for a smooth strictly convex body the outward Gauss map gives a smooth
diffeomorphism with the unit sphere.

Take the outward unit normal
$$
\nu=\frac{\nabla G_{\eps,\delta}}{|\nabla G_{\eps,\delta}|}.
$$
For tangent vectors $v\in T M_{\eps,\delta}$, the second fundamental form in the
convention $\mathrm{II}(v,v)=\langle D_v\nu,v\rangle$ is
\begin{equation}\label{eq:second-fundamental-level-set}
\mathrm{II}(v,v)=\frac{D^2G_{\eps,\delta}(v,v)}{|\nabla G_{\eps,\delta}|}.
\end{equation}
Since $D^2G_{\eps,\delta}$ is positive definite by
\eqref{eq:hessian-G-eps-delta}, $\mathrm{II}$ is positive definite in this
convention.  With the opposite sign convention for the second fundamental form it
would be negative definite; in either convention the two principal curvatures
have the same nonzero sign, and their product, the Gaussian curvature, is
positive.
\end{proof}

\begin{lemma}\label{lem:pancake-convergence-fixed-delta}
For each fixed sufficiently small $\delta>0$,
\begin{equation}\label{eq:pancake-fixed-delta-convergence}
(M_{\eps,\delta},d_{M_{\eps,\delta}},\mm_{M_{\eps,\delta}})
\longrightarrow
(D\Om_\delta,d_{D\Om_\delta},\mm_{D\Om_\delta})
\end{equation}
in measured Gromov--Hausdorff sense as $\eps\downarrow0$.
\end{lemma}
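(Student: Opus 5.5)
We prove \eqref{eq:pancake-fixed-delta-convergence} with the explicit vertical projection. Write $h_\eps(x)=\eps\sqrt{1-\psi_\delta(x)}$ for $x\in\overline{\Om_\delta}$, so that $M_{\eps,\delta}$ is the union of the graphs $z=\pm h_\eps(x)$, glued along $\partial\Om_\delta\times\{0\}$. Define $\mathcal P_\eps:M_{\eps,\delta}\to D\Om_\delta$ by $\mathcal P_\eps(x,z)=x^+$ if $z>0$, $x^-$ if $z<0$, and the seam point $x$ if $z=0$. This map is a surjective bijection.

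\textbf{Step 1: distortion.} We show
$$
d_{D\Om_\delta}(\mathcal P_\eps p,\mathcal P_\eps q)\le d_{M_{\eps,\delta}}(p,q)\le d_{D\Om_\delta}(\mathcal P_\eps p,\mathcal P_\eps q)+o(1)
$$
uniformly as $\eps\downarrow0$.

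For the lower bound, take any curve on $M_{\eps,\delta}$. Its horizontal projection has no greater Euclidean length. A curve going from the upper sheet to the lower one must cross $z=0$, which lies over $\partial\Om_\delta$. Hence its projected length is at least the right-hand side of \eqref{eq:double-distance-same-sheet} or \eqref{eq:double-distance-opposite-sheet}, with $C=\overline{\Om_\delta}$.

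For the upper bound, lift a near-optimal polygonal path in $D\Om_\delta$ (a segment, or two segments through a seam point $q$) to the graphs. The lifted length is
$$
\int\sqrt{1+|\nabla h_\eps\cdot\dot\gamma|^2}.
$$
Here lies the main obstacle: $|\nabla h_\eps|$ blows up like $\eps|\nabla\psi_\delta|/(2\sqrt{1-\psi_\delta})$ near $\partial\Om_\delta$, so the graph is not uniformly Lipschitz.

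We avoid the problem by monotonicity. Along a straight segment, $t\mapsto\psi_\delta(\gamma(t))$ is convex, so $h_\eps\circ\gamma$ has at most two monotone pieces. The arclength of a curve is bounded by its horizontal length plus its total vertical variation, and the vertical variation is at most $2\cdot 2\max h_\eps\le 4\eps$. Therefore each lifted segment has length at most its Euclidean length plus $4\eps$. For paths through the seam, the vertical coordinate returns to $0$ at $q$, and the same bound applies on each segment. The additive error is thus $O(\eps)$ uniformly in the endpoints, which proves the distortion bound, and $\mathcal P_\eps$ is an $O(\eps)$-Gromov--Hausdorff approximation.

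\textbf{Step 2: measures.} The area element of the upper graph is $\sqrt{1+|\nabla h_\eps|^2}\,dx$. We have $|\nabla h_\eps|^2=\eps^2|\nabla\psi_\delta|^2/(4(1-\psi_\delta))$. Because $\nabla\psi_\delta\ne0$ on $\partial\Om_\delta$ (Lemma~\ref{lem:Omega-delta}), $1-\psi_\delta$ is comparable to the distance to $\partial\Om_\delta$ near the boundary. Hence $1/\sqrt{1-\psi_\delta}$ is integrable on $\Om_\delta$, and
$$
\int_{\Om_\delta}\Bigl(\sqrt{1+|\nabla h_\eps|^2}-1\Bigr)dx\le\int_{\Om_\delta}|\nabla h_\eps|\,dx=O(\eps).
$$
The two graphs have equal area, and the seam curve has zero area. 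It follows that $(\mathcal P_\eps)_\#\mm_{M_{\eps,\delta}}$ has density at least $1$ on each sheet with respect to $\mm_{D\Om_\delta}$, and that its total-variation distance to $\mm_{D\Om_\delta}$ is $O(\eps)$. In particular it converges weakly to $\mm_{D\Om_\delta}$.

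Combining Steps 1 and 2 gives measured Gromov--Hausdorff convergence. The only delicate point is the vertical-variation bound in Step 1, which replaces the unavailable uniform gradient bound near the rim.
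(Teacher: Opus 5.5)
Your proposal is correct and follows essentially the same route as the paper: the horizontal projection serves as the Gromov--Hausdorff approximation, projection of curves gives the lower bound, and a bounded vertical-variation estimate along lifted straight segments gives the $O(\eps)$ upper bound (your unimodality of $h_\eps\circ\gamma$ is the paper's concavity of $f_\delta\circ\gamma$), while $|\nabla f_\delta|\le C r^{-1/2}$ near the rim gives the $L^1$ control of the area density. The only differences are cosmetic: your bound of $4\eps$ per segment is loose, since each monotone piece varies by at most $\eps$, and you get total-variation convergence of the push-forward measures with an explicit rate, where the paper uses dominated convergence to get weak convergence.
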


\begin{proof}
Set
$$
f_\delta(x)=\sqrt{1-\psi_\delta(x)},
\qquad x\in\overline{\Om_\delta}.
$$
Because $\psi_\delta$ is convex, $1-\psi_\delta$ is concave and nonnegative on
$\Om_\delta$; since $t\mapsto\sqrt t$ is increasing and concave on
$[0,\infty)$, $f_\delta$ is concave.  Also $0\le f_\delta\le1$.
Note that $M_{\eps,\delta}$ is the union of the two graphs
\begin{equation}\label{eq:pancake-two-graphs}
z=\eps f_\delta(x),
\qquad
z=-\eps f_\delta(x),
\qquad
x\in\overline{\Om_\delta}.
\end{equation}
The graph parametrization in \eqref{eq:pancake-two-graphs} is singular at
$\partial\Om_\delta$, but Lemma~\ref{lem:positive-curvature} proves that the
level surface itself is smooth there.

Define
$$
\pi_{\eps,\delta}:M_{\eps,\delta}\to D\Om_\delta
$$
by horizontal projection, sending the upper graph to the positive sheet and the
lower graph to the negative sheet.  This is well-defined on the seam because
$f_\delta=0$ on $\partial\Om_\delta$, and the two boundary copies are identified
in the double.  Horizontal projection does not increase the length of rectifiable
curves.  Therefore
\begin{equation}\label{eq:pancake-projection-lower-bound}
d_{D\Om_\delta}(\pi_{\eps,\delta}(p),\pi_{\eps,\delta}(q))
\le d_{M_{\eps,\delta}}(p,q)
\qquad\text{for all }p,q\in M_{\eps,\delta}.
\end{equation}

Conversely, suppose first that $p,q$ lie on the same sheet and that
$\pi_{\eps,\delta}(p)=x^\sigma$, $\pi_{\eps,\delta}(q)=y^\sigma$.  The Euclidean
segment $\gamma(t)=(1-t)x+ty$ is contained in $\Om_\delta$.  Since
$f_\delta\circ\gamma$ is concave, it has bounded variation, and the length of its
lift to the corresponding graph satisfies
\begin{equation}\label{eq:same-sheet-lift-length}
\mathcal L\bigl(t\mapsto(\gamma(t),\pm\eps f_\delta(\gamma(t)))\bigr)
\le |x-y|+\eps\operatorname{Var}(f_\delta\circ\gamma).
\end{equation}
Because $0\le f_\delta\le1$, the total variation of the one-dimensional concave
function $f_\delta\circ\gamma$ is at most $2$.  Thus the lifted segment in
\eqref{eq:same-sheet-lift-length} has length at most $|x-y|+2\eps$.

If $p$ and $q$ lie on opposite sheets, then, for arbitrary $\rho>0$, choose
$a\in\partial\Om_\delta$ such that
$$
|x-a|+|a-y|\le d_{D\Om_\delta}(x^+,y^-)+\rho.
$$
Lift $x\to a$ on the upper graph and $a\to y$ on the lower graph.  By the same
estimate, the total lifted length is at most
$ d_{D\Om_\delta}(x^+,y^-)+4\eps+\rho $.  Letting $\rho\downarrow0$ gives
\begin{equation}\label{eq:pancake-projection-upper-bound}
d_{M_{\eps,\delta}}(p,q)
\le d_{D\Om_\delta}(\pi_{\eps,\delta}(p),\pi_{\eps,\delta}(q))+4\eps.
\end{equation}
Equations \eqref{eq:pancake-projection-lower-bound} and
\eqref{eq:pancake-projection-upper-bound} show that $\pi_{\eps,\delta}$ is onto
and has distortion at most $4\eps$.  This is Gromov--Hausdorff convergence.

It remains to prove measure convergence.  On one graph, away from the boundary,
the area density is
\begin{equation}\label{eq:area-density}
dA_{\eps,\delta}=\sqrt{1+\eps^2|\nabla f_\delta|^2}\,dx.
\end{equation}
Although $f_\delta$ is not Lipschitz up to $\partial\Om_\delta$, the formula
\eqref{eq:area-density} is justified by applying the graph area formula first on
the compact exhaustion
$$
\Om_{\delta,s}:=\{x\in\Om_\delta:\dist_{\R^2}(x,\partial\Om_\delta)\ge s\},
$$
where $f_\delta$ is smooth, and then letting $s\downarrow0$.  The exhausted graph
pieces increase to the whole graph up to the seam, the seam itself has zero
two-dimensional area, and monotone convergence of the areas applies once the
finite collar contribution is verified by the estimate below.

We claim that
\begin{equation}\label{eq:grad-f-integrable}
|\nabla f_\delta|\in L^1(\Om_\delta).
\end{equation}
Away from the boundary this is clear.  Near $\partial\Om_\delta$, the boundary is
smooth and $|\nabla\psi_\delta|$ is bounded above and below by positive
constants.  In a normal collar, if $r=\dist_{\R^2}(x,\partial\Om_\delta)$, Taylor
expansion gives $1-\psi_\delta(x)\simeq c r$ and
$|\nabla\psi_\delta(x)|\le C$.  Hence
$$
|\nabla f_\delta(x)|=
\frac{|\nabla\psi_\delta(x)|}{2\sqrt{1-\psi_\delta(x)}}
\le C r^{-1/2},
$$
which is integrable in a two-dimensional collar.  This proves
\eqref{eq:grad-f-integrable}.  Therefore, for $0<\eps\le1$,
$$
0\le \sqrt{1+\eps^2|\nabla f_\delta|^2}-1\le \eps |\nabla f_\delta|
\le |\nabla f_\delta|,
$$
and dominated convergence, using \eqref{eq:grad-f-integrable}, gives
$$
\sqrt{1+\eps^2|\nabla f_\delta|^2}\to1
\quad\text{in }L^1(\Om_\delta).
$$
The same holds on the lower graph.  Thus
$(\pi_{\eps,\delta})_\#\mm_{M_{\eps,\delta}}$ converges weakly to the sum
of two copies of Lebesgue measure on $\Om_\delta$, namely to
$\mm_{D\Om_\delta}$.  Together with the Gromov--Hausdorff convergence above,
this proves \eqref{eq:pancake-fixed-delta-convergence}.
\end{proof}

\begin{proposition}[Pancake approximation]\label{prop:pancake-approx}
There exists a sequence of smooth embedded surfaces $M_j\subset\R^3$, each
diffeomorphic to $\mathbb S^2$ and having positive Gaussian curvature, such that
\begin{equation}\label{eq:pancake-sequence-convergence}
(M_j,d_{M_j},\mm_{M_j})
\longrightarrow
(X,d_X,\mm_X)
\end{equation}
in measured Gromov--Hausdorff sense.
\end{proposition}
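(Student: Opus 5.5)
The plan is a diagonal argument combining Lemma~\ref{lem:doubles-converge} and Lemma~\ref{lem:pancake-convergence-fixed-delta}, with Lemma~\ref{lem:positive-curvature} supplying smoothness and positive curvature. To make the diagonal step transparent, I will use the measured Gromov--Hausdorff distance in a quantitative form. Concretely, I work with the following notion of an $\epsilon$-mGH approximation: a map $F:Y\to Z$ with distortion at most $\epsilon$, whose image is $\epsilon$-dense, and such that $d_{\mathrm P}(F_\#\mm_Y,\mm_Z)\le\epsilon$, where $d_{\mathrm P}$ is the L\'evy--Prokhorov metric on finite Borel measures on the compact space $Z$. Since all spaces here are compact with finite total mass, weak convergence of measures on a fixed compact space is metrized by $d_{\mathrm P}$. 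So the two lemmas already say that for every $\delta$ small there is an $a(\delta)$-approximation $\mathcal F_\delta:D\Om_\delta\to X$ with $a(\delta)\to0$, and for every such fixed $\delta$ there is an $b(\eps,\delta)$-approximation $\pi_{\eps,\delta}:M_{\eps,\delta}\to D\Om_\delta$ with $b(\eps,\delta)\to0$ as $\eps\downarrow0$.

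Next I choose $\delta_j\downarrow0$ within the range where Lemmas~\ref{lem:Omega-delta} and \ref{lem:positive-curvature} apply and with $a(\delta_j)\le1/j$. For each $j$ I then pick $\eps_j>0$ so small that $b(\eps_j,\delta_j)\le 1/j$, and I set $M_j:=M_{\eps_j,\delta_j}$. By Lemma~\ref{lem:positive-curvature}, each $M_j$ is a smooth embedded surface in $\R^3$, diffeomorphic to $\mathbb S^2$, with positive Gaussian curvature.

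It remains to compose: $G_j:=\mathcal F_{\delta_j}\circ\pi_{\eps_j,\delta_j}:M_j\to X$. The distortion of $G_j$ is at most the sum of the two distortions, hence $\le 2/j$. For density, every point of $X$ is within $1/j$ of $\mathcal F_{\delta_j}(D\Om_{\delta_j})$. Each point of $D\Om_{\delta_j}$ is within $1/j$ of $\pi(M_j)$, and $\mathcal F$ moves distances by at most its distortion. Together these give density $\le 3/j$.

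The measure step is the only one needing care, and I expect it to be the main (mild) obstacle, because GH approximations like $\mathcal F_\delta$ are discontinuous, so push-forward is not obviously Prokhorov-Lipschitz. I would handle it by noting that $\pi_{\eps,\delta}$ is actually $1$-Lipschitz onto $D\Om_\delta$ by \eqref{eq:pancake-projection-lower-bound}, and $\mathcal F_\delta$ is the identity off the null seam. So for a closed set $A\subset X$ one checks directly that $\mathcal F_{\delta\#}\mu(A)\le\mathcal F_{\delta\#}\nu(A^{r+c})+r$ whenever $d_{\mathrm P}(\mu,\nu)\le r$ and $\mu,\nu$ give zero mass to the seam, with $c$ the distortion of $\mathcal F_\delta$. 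Alternatively, and more simply, I would test weak convergence against Lipschitz functions $h$ on $X$. Then $h\circ\mathcal F_\delta$ agrees off the seam with a Lipschitz function on $D\Om_\delta$ with constant $\mathrm{Lip}(h)$ up to an additive error $\mathrm{Lip}(h)(4b_\delta+2\delta)$. The measure convergence of Lemma~\ref{lem:pancake-convergence-fixed-delta}, which is $L^1$ convergence of area densities, then gives $\bigl|\int h\,d(G_j)_\#\mm_{M_j}-\int h\circ\mathcal F_{\delta_j}\,d\mm_{D\Om_{\delta_j}}\bigr|\to0$ after possibly shrinking $\eps_j$ further. Combined with Lemma~\ref{lem:doubles-converge}, this yields $(G_j)_\#\mm_{M_j}\rightharpoonup\mm_X$. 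This proves \eqref{eq:pancake-sequence-convergence}.
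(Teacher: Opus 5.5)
Your proposal is correct and follows the paper's route. The paper also makes a diagonal choice: $\delta_j\downarrow0$, then $\eps_j$ small via Lemma~\ref{lem:pancake-convergence-fixed-delta}, with Lemma~\ref{lem:positive-curvature} giving smoothness and positive curvature and Lemma~\ref{lem:doubles-converge} handling the second leg. The difference is that the paper simply cites the triangle inequality for measured Gromov--Hausdorff convergence, while you verify it by hand through the composed maps $\mathcal F_{\delta_j}\circ\pi_{\eps_j,\delta_j}$ (which conveniently are exactly the approximation maps needed later). Your measure step can be simpler still: the $L^1$ convergence of the area densities is total-variation convergence, which survives push-forward by any Borel map, so you never need the almost-Lipschitz property of $h\circ\mathcal F_\delta$.
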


\begin{proof}
Choose any sequence $\delta_j\downarrow0$.  By
Lemma~\ref{lem:pancake-convergence-fixed-delta}, for each $j$ one may choose
$\eps_j>0$ so small that the measured Gromov--Hausdorff distance from
$M_{\eps_j,\delta_j}$ to $D\Om_{\delta_j}$ is less than $1/j$.  Set
\begin{equation}\label{eq:Mj-definition}
M_j=M_{\eps_j,\delta_j}.
\end{equation}
By Lemma~\ref{lem:positive-curvature}, each $M_j$ in \eqref{eq:Mj-definition} is
a smooth positively curved embedded two-sphere.  Lemma~\ref{lem:doubles-converge}
and the triangle inequality for measured Gromov--Hausdorff convergence give
\eqref{eq:pancake-sequence-convergence}.
\end{proof}

\section{Spectral convergence and persistence of maxima}\label{sec:spectral}

\subsection{The spectral-convergence tool}

We use Honda's compactness theorem for spectral data on compact
finite-dimensional \(\RCD\) spaces.  The formulation below is the compact case of
\cite[Theorem~3.7]{Honda}, with convergence of spectral data understood in the
sense of \cite[Definition~3.6]{Honda}.

\begin{theorem}[Honda spectral-data compactness]\label{thm:honda}
Let
\[
(Y_j,d_j,\mm_j)\longrightarrow (Y,d,\mm)
\]
be a measured Gromov--Hausdorff convergent sequence of compact
finite-dimensional \(\RCD(\kappa,n)\) spaces.  Assume that, after discarding
finitely many terms, there are constants \(d_0,v_0\ge1\) such that
\[
d_0^{-1}\le \diam(Y_j,d_j)\le d_0,\qquad
d_0^{-1}\le \diam(Y,d)\le d_0,
\]
and
\[
v_0^{-1}\le \mm_j(Y_j)\le v_0,\qquad
v_0^{-1}\le \mm(Y)\le v_0 .
\]
Choose Borel maps
\[
\mathcal F_j:Y_j\to Y
\]
realizing the measured Gromov--Hausdorff convergence, in the sense that
\(\mathcal F_j\) is a \(\gamma_j\)-Gromov--Hausdorff approximation,
\(\gamma_j\downarrow0\), and
\[
(\mathcal F_j)_\#\mm_j\rightharpoonup\mm .
\]
For compact spaces with discrete spectrum, let the eigenvalues be indexed as in
\eqref{eq:eigenvalue-indexing}.  Then
\begin{equation}\label{eq:honda-eigenvalue-convergence}
\lambda_k(Y_j)\to\lambda_k(Y)
\qquad(k=0,1,2,\ldots),
\end{equation}
with eigenvalues counted with multiplicity.

Moreover, if
\[
a_j=(\varphi^j_0,\varphi^j_1,\ldots)
\]
is any sequence of orthonormal spectral data on \(Y_j\), then, after passing to a
subsequence, there is an orthonormal spectral datum
\[
a=(\varphi_0,\varphi_1,\ldots)
\]
on \(Y\) such that, for each fixed \(\ell\),
\[
\varphi^j_\ell(y_j)\to \varphi_\ell(y)
\qquad\text{whenever}\qquad
\mathcal F_j(y_j)\to y .
\]
In particular, for each fixed \(\ell\),
\begin{equation}\label{eq:honda-spectral-data-uniform}
\sup_{y_j\in Y_j}
\left|
\varphi^j_\ell(y_j)-\varphi_\ell(\mathcal F_j(y_j))
\right|
\longrightarrow0 .
\end{equation}
\end{theorem}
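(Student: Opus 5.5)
The plan is not to reprove spectral convergence from scratch but to reduce the statement to the compact case of \cite[Theorem~3.7]{Honda}, and then to derive the uniform form \eqref{eq:honda-spectral-data-uniform} from the pointwise form by an equicontinuity argument.

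\textbf{Reduction to Honda's setting.} First I check that the hypotheses match those of \cite[Theorem~3.7]{Honda}: a measured Gromov--Hausdorff convergent sequence of $\RCD(\kappa,n)$ spaces with uniformly controlled diameters and total masses. The two-sided diameter bound and the two-sided mass bound are exactly what keeps the sequence in a precompact class of \emph{compact} spaces with non-collapsed normalization. They also exclude collapse of the total measure, so that the renormalization in \cite[Definition~3.6]{Honda} can be dropped.

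\textbf{Eigenvalues.} The eigenvalue convergence \eqref{eq:honda-eigenvalue-convergence} is then Honda's statement. It rests on Mosco convergence of the Cheeger energies under mGH convergence for $\RCD$ spaces (Gigli--Mondino--Savar\'e), together with compactness of the resolvents. The latter comes from the uniform local Poincar\'e inequality and Bishop--Gromov volume doubling, which are valid with constants depending only on $\kappa,n,d_0,v_0$.

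\textbf{Eigenfunctions, pointwise.} For the eigenfunction part I would proceed as follows.
\begin{enumerate}[label=(\roman*)]
\item Use \eqref{eq:honda-eigenvalue-convergence} to bound $\lambda_\ell(Y_j)$ uniformly in $j$ for each fixed $\ell$.
\item Apply the standard $\RCD$ estimates for eigenfunctions: heat-kernel Gaussian bounds give an $L^\infty$ bound, and the Bakry--\'Emery gradient estimate (or Jiang's gradient estimate) gives a Lipschitz bound. Both are of the form $\|\varphi^j_\ell\|_{L^\infty}+\operatorname{Lip}(\varphi^j_\ell)\le C(\kappa,n,d_0,v_0,\ell)$, uniformly in $j$.
\item Run a diagonal Arzel\`a--Ascoli argument along the approximations $\mathcal F_j$. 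This produces, after passing to a subsequence, Lipschitz limits $\varphi_\ell$ on $Y$ with $\varphi^j_\ell(y_j)\to\varphi_\ell(y)$ whenever $\mathcal F_j(y_j)\to y$.
\item Identify the limits. $L^2$-strong convergence, which follows from the weak convergence of the push-forward measures combined with the uniform bounds, preserves orthonormality. Mosco convergence shows that each $\varphi_\ell$ is a $\lambda_\ell(Y)$-eigenfunction.
\end{enumerate}

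\textbf{Uniform convergence.} Finally, I deduce \eqref{eq:honda-spectral-data-uniform} by contradiction. Suppose there are points $y_j\in Y_j$ with $|\varphi^j_\ell(y_j)-\varphi_\ell(\mathcal F_j(y_j))|\ge\epsilon$. By compactness of $Y$ we may pass to a further subsequence with $\mathcal F_j(y_j)\to y$. The pointwise statement gives $\varphi^j_\ell(y_j)\to\varphi_\ell(y)$, and continuity of $\varphi_\ell$ gives $\varphi_\ell(\mathcal F_j(y_j))\to\varphi_\ell(y)$. This contradicts the assumed gap $\epsilon$.

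The main obstacle is step (ii), the uniform Lipschitz control of eigenfunctions along the sequence. This is where the curvature-dimension condition is genuinely used, and I would simply invoke the known $\RCD$ gradient estimate rather than rederive it. Everything else is soft functional analysis once that estimate is available.
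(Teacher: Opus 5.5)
Your proposal is correct and follows the paper. The paper likewise does not reprove this result but takes it as the compact case of Honda's Theorem~3.7. It adds only the remark that Honda's normalized reference measures differ from the unnormalized ones by constants, which leave eigenvalues unchanged and rescale $L^2$-normalized eigenfunctions by factors that converge since $\mm_j(Y_j)\to\mm(Y)\ge v_0^{-1}$; your phrase that the renormalization ``can be dropped'' should be read in this sense. Your compactness-and-contradiction derivation of \eqref{eq:honda-spectral-data-uniform} is a valid filling-in of the paper's ``in particular''. When you pass to a sub-subsequence, use the $\gamma_j$-density of $\mathcal F_j(Y_j)$ to extend the bad points to a full sequence with $\mathcal F_j(y_j)\to y$ before invoking the pointwise statement.
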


\begin{corollary}[Simple limiting eigenvalue]\label{cor:honda-simple}
In the setting of Theorem~\ref{thm:honda}, assume that \(\lambda_k(Y)\) is
simple.  Then \(\lambda_k(Y_j)\) is simple for all sufficiently large \(j\).

Let \(u_j\) be an \(L^2(\mm_j)\)-normalized \(\lambda_k(Y_j)\)-eigenfunction, and
let \(u\) be an \(L^2(\mm)\)-normalized \(\lambda_k(Y)\)-eigenfunction.  After
passing to a subsequence and multiplying \(u_j\) by signs, there are numbers
\(\beta_j\downarrow0\) such that
\begin{equation}\label{eq:honda-uniform-with-maps}
\sup_{y_j\in Y_j}
\left|
u_j(y_j)-u(\mathcal F_j(y_j))
\right|
\le \beta_j .
\end{equation}
\end{corollary}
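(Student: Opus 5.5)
The plan is to derive both assertions of Corollary~\ref{cor:honda-simple} directly from Theorem~\ref{thm:honda}: eigenvalue convergence gives simplicity, and spectral-data compactness, applied to the $k$-th slot, gives uniform convergence.

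\textbf{Simplicity.} Since $\lambda_k(Y)$ is simple, we have $\lambda_{k-1}(Y)<\lambda_k(Y)<\lambda_{k+1}(Y)$; for $k=0$ the left inequality is vacuous, and in fact $\lambda_0$ is always simple on a connected space. By \eqref{eq:honda-eigenvalue-convergence}, $\lambda_{k\pm1}(Y_j)\to\lambda_{k\pm1}(Y)$ and $\lambda_k(Y_j)\to\lambda_k(Y)$. Hence for large $j$,
$$
\lambda_{k-1}(Y_j)<\lambda_k(Y_j)<\lambda_{k+1}(Y_j).
$$
Because eigenvalues are listed with multiplicity, this says $\lambda_k(Y_j)$ is simple.

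\textbf{Uniform convergence.} For large $j$, choose an orthonormal spectral datum $a_j$ on $Y_j$ whose $k$-th entry is $\varphi^j_k=u_j$. This is possible because the $\lambda_k(Y_j)$-eigenspace is one-dimensional, so $u_j$ can be completed by orthonormal bases of the other eigenspaces. Theorem~\ref{thm:honda} then yields a subsequence and a limit datum $a$ on $Y$ satisfying \eqref{eq:honda-spectral-data-uniform} with $\ell=k$.

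Next I identify the limit. The $k$-th entry $\varphi_k$ of $a$ is an $L^2(\mm)$-normalized eigenfunction for $\lambda_k(Y)$; the statement presents a spectral datum as a sequence of eigenfunctions indexed compatibly with \eqref{eq:eigenvalue-indexing}, and Honda's construction passes eigenvalues to the limit. Since $\lambda_k(Y)$ is simple, this forces $\varphi_k=\pm u$. Replacing $u_j$ by $\mp u_j$ along the subsequence, and setting
$$
\beta_j:=\sup_{Y_j}\bigl|u_j-u\circ\mathcal F_j\bigr|
$$
(replaced by the running tail supremum $\sup_{i\ge j}$ if monotonicity is required), gives \eqref{eq:honda-uniform-with-maps} with $\beta_j\downarrow0$.

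\textbf{Main obstacle.} The delicate point is the identification step: checking that Honda's Definition~3.6 of a spectral datum guarantees $\varphi_k$ is a $\lambda_k(Y)$-eigenfunction with unit norm, rather than merely the $k$-th element of some orthonormal system. If the definition does not make this explicit, I would verify it by passing the weak eigen-equation to the limit. The ingredients are Mosco/$L^2$-convergence of the energies under measured Gromov--Hausdorff convergence of $\RCD$ spaces, together with uniform convergence, which turns $L^2$-norms and orthogonality relations into limits via weak convergence of $(\mathcal F_j)_\#\mm_j$. A minor remaining point is that simplicity holds only for large $j$, so the finitely many earlier terms are discarded before building $a_j$.
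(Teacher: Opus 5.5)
Your proposal is correct and follows the paper's proof essentially step for step. Simplicity comes from separating $\lambda_k(Y)$ from its neighbours and using eigenvalue convergence. The uniform estimate comes from completing $u_j$ to a spectral datum with $\varphi^j_k=u_j$, applying Honda's compactness, and using one-dimensionality of the limit eigenspace to get $\varphi_k=\pm u$. The identification you flag as delicate is simply read off, in the paper, from the definition of an orthonormal spectral datum.

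One trivial slip: if $\varphi_k=\pm u$, you should replace $u_j$ by $\pm u_j$, not $\mp u_j$.
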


\begin{proof}
Since \(\lambda_k(Y)\) is simple, it is separated from the adjacent eigenvalues:
\[
\lambda_{k-1}(Y)<\lambda_k(Y)<\lambda_{k+1}(Y),
\]
with the left inequality omitted when \(k=0\).  By
\eqref{eq:honda-eigenvalue-convergence}, the same strict separation holds for
\(Y_j\) for all sufficiently large \(j\).  Hence \(\lambda_k(Y_j)\) is simple
for all sufficiently large \(j\).

For these \(j\), complete \(u_j\) to an orthonormal spectral datum
\[
a_j=(\varphi^j_0,\varphi^j_1,\ldots)
\]
on \(Y_j\), choosing \(\varphi^j_k=u_j\).  By
Theorem~\ref{thm:honda}, after passing to a subsequence, these spectral data
converge to an orthonormal spectral datum
\[
a=(\varphi_0,\varphi_1,\ldots)
\]
on \(Y\).  In particular, \(\varphi_k\) is an \(L^2(\mm)\)-normalized
\(\lambda_k(Y)\)-eigenfunction.  Since the limiting eigenspace is
one-dimensional, \(\varphi_k=u\) or \(\varphi_k=-u\).  Replacing \(u_j\) by
\(-u_j\) along the subsequence if necessary, we may assume that the limit is
\(u\).  Then \eqref{eq:honda-spectral-data-uniform} gives
\eqref{eq:honda-uniform-with-maps}.
\end{proof}

We remark that
Honda's definition of measured Gromov--Hausdorff approximation is written using
normalized reference measures together with convergence of the total masses; see
\cite[Definition~3.4]{Honda}.  In the finite-measure convention used here this is
equivalent to weak convergence of the unnormalized push-forward measures.  A
constant rescaling of the reference measure leaves Rayleigh quotients, and hence
eigenvalues, unchanged; it only rescales \(L^2\)-normalized eigenfunctions.

\begin{lemma}\label{lem:RCD}
The sequence \(M_j\) from Proposition~\ref{prop:pancake-approx} and the limit
\(X=D\Om\) satisfy the hypotheses of Theorem~\ref{thm:honda} with
\(\kappa=0\) and \(n=2\), after discarding finitely many terms.
\end{lemma}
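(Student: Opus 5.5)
We need to verify three things: the $\RCD(0,2)$ condition for each $M_j$ and for $X$, the mGH convergence via Borel approximation maps, and the uniform diameter and mass bounds.

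\textbf{Curvature condition.} Each $M_j$ is a smooth closed Riemannian surface with positive Gaussian curvature. Its Ricci curvature therefore equals $K\,g\ge0$, so $(M_j,d_{M_j},\HH^2)$ is an $\RCD(0,2)$ space. Here we use the standard fact that a smooth compact $n$-manifold with $\mathrm{Ric}\ge\kappa$, equipped with its volume measure, is $\RCD(\kappa,n)$.

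For the limit there are two routes.

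\emph{Route (a).} Note that $X=D\Om$ is a compact Alexandrov space of curvature $\ge0$. This follows from Perelman's doubling theorem applied to the convex polygon, which is itself an Alexandrov space with boundary. Alternatively, it can be seen directly: $X$ is a polyhedral surface whose cone angles at the doubled vertices equal $2\theta<2\pi$, and is flat elsewhere. By Petrunin's theorem together with the results of Zhang--Zhu, an $n$-dimensional Alexandrov space of curvature $\ge\kappa$ with $\HH^n$ is $\RCD((n-1)\kappa,n)$. This gives $\RCD(0,2)$.

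\emph{Route (b).} This route is more self-contained. Since $M_j\to X$ in mGH sense by Proposition~\ref{prop:pancake-approx}, and the class of $\RCD(0,2)$ spaces is closed under mGH convergence (stability of the curvature-dimension condition, Lott--Villani--Sturm, Ambrosio--Gigli--Savar\'e), $X$ is $\RCD(0,2)$ automatically.

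I would use route (b) as the main argument and mention route (a) as a consistency check. One remark is needed: the limit measure in route (b) is the one produced by the convergence, namely the weak limit $\mm_X=\HH^2_X$ from Lemmas~\ref{lem:doubles-converge} and \ref{lem:pancake-convergence-fixed-delta}. So the Cheeger energy on $X$ is exactly the one computed in Lemma~\ref{lem:sobolev-double}.

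\textbf{Approximation maps.} Compose the maps $\pi_{\eps_j,\delta_j}$ with $\mathcal F_{\delta_j}$. The composite $\mathcal F_j:=\mathcal F_{\delta_j}\circ\pi_{\eps_j,\delta_j}$ is Borel, since projection is continuous and $Q_\delta$ is Borel. Its distortion is at most $4\eps_j+4b_{\delta_j}+2\delta_j$, and it is $(a_{\delta_j}+b_{\delta_j}+\delta_j)$-dense. For the measures, $(\mathcal F_j)_\#\mm_{M_j}$ converges weakly to $\mm_X$. This holds because the densities $\sqrt{1+\eps_j^2|\nabla f_{\delta_j}|^2}$ are close to $1$ in $L^1$, where the diagonal choice of $\eps_j$ can be made to enforce an explicit error $<1/j$, and because $\chi_{\Om_{\delta_j}}\to\chi_\Om$ in $L^1$. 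Weak convergence then follows by testing against Lipschitz functions on $X$.

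\textbf{Diameter and mass bounds.} These are elementary.
\begin{itemize}
\item Diameters: by \eqref{eq:pancake-projection-lower-bound} and \eqref{eq:pancake-projection-upper-bound}, $\diam M_j\le\diam D\Om_{\delta_j}+4\eps_j\le 2\diam\Om+4$. The double-distance formulas give $\diam D\Om_\delta\ge\diam\Om_\delta\ge\tfrac12\diam\Om$ for small $\delta$, and projection is $1$-Lipschitz, so $\diam M_j\ge\tfrac12\diam\Om$.
\item Masses: we have $2|\Om_{\delta_j}|\le\mm_{M_j}(M_j)\le 2|\Om|+2\|\nabla f_{\delta_j}\|_{L^1}\eps_j$. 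The first inequality holds since the area density is $\ge1$. For the upper bound, choose $\eps_j$ so that $\eps_j\|\nabla f_{\delta_j}\|_{L^1}\le1$, which makes it at most $2|\Om|+2$. Also $|\Om_{\delta_j}|\ge\tfrac12|\Om|$ for large $j$ by Lemma~\ref{lem:Omega-delta}.
\item Limit: $\mm_X(X)=2|\Om|$.
\end{itemize}
Taking $d_0,v_0$ large finishes the check, after discarding the finitely many $j$ with $\delta_j$ not small.

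\textbf{Main obstacle.} The delicate point is the $\RCD$ property of the singular limit $X$ with exactly the Hausdorff measure, so that Honda's limit Laplacian coincides with the Friedrichs operator $-\Delta_X$ of Section~\ref{sec:double-spectrum}. Stability under mGH limits settles this cleanly. The explicit control of $\eps_j$ in the diagonal argument is only bookkeeping.
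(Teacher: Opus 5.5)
Your proposal is correct, but your main argument for the limit differs from the paper's. The paper proves that $X=D\Om$ is $\RCD(0,2)$ \emph{intrinsically}, essentially along your route (a). The convex polygon is an Alexandrov space of curvature $\ge0$, and its double has cone angles at most $2\pi$. Petrunin's gluing theorem, as used by Kapovitch--Ketterer--Sturm, then makes $X$ an Alexandrov space of curvature $\ge0$, and their Alexandrov-to-$\RCD$ result gives $\RCD(0,2)$ for $(X,d_X,\HH^2_X)$. Your route (b) instead obtains this from stability of $\RCD(0,2)$ under measured Gromov--Hausdorff convergence applied to $M_j\to X$. That is legitimate; at $K=0$ there is not even a $\mathrm{CD}$ versus $\mathrm{CD}^*$ subtlety. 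It also shows that the $\RCD$ hypothesis on the limit in Honda's theorem is automatic once convergence with limit measure $\HH^2_X$ is known. The trade-off is that route (b) makes the curvature property of $X$ depend on the full measured convergence of Proposition~\ref{prop:pancake-approx}, including the measure part. The Alexandrov argument holds for the double of any convex polygon, with no approximation at all. For the remaining hypotheses the paper is shorter than you. Measured Gromov--Hausdorff convergence to the compact space $X$, which is not a point and has $0<\mm_X(X)<\infty$, already forces $\diam M_j\to\diam X$ and $\mm_{M_j}(M_j)\to\mm_X(X)$, so the bounds hold after discarding finitely many terms. The approximation maps are simply any maps realizing the convergence. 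Your explicit composite maps $\mathcal F_{\delta_j}\circ\pi_{\eps_j,\delta_j}$ and your diameter and mass estimates are correct. However, the extra smallness conditions you impose on $\eps_j$ (such as $\eps_j\le1$ and $\eps_j\|\nabla f_{\delta_j}\|_{L^1}\le1$) are unnecessary, and they slightly alter the sequence of Proposition~\ref{prop:pancake-approx}. The bounds you derive from them already follow from the convergence itself.
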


\begin{proof}
Let \(g_j\) be the induced Riemannian metric on \(M_j\).  Each \(M_j\) is a
smooth closed two-dimensional Riemannian manifold with positive Gaussian
curvature.  In dimension two,
\[
\operatorname{Ric}_{g_j}=\mathsf K_{g_j}g_j ,
\]
and hence \(\operatorname{Ric}_{g_j}\ge0\).  Therefore each \(M_j\), equipped with
its Riemannian distance and area measure, is an \(\RCD(0,2)\) space.

The convex polygon \(\Om\), with its intrinsic Euclidean metric, is a compact
Alexandrov space of curvature at least \(0\) with boundary.  Equivalently, at a
vertex of interior angle \(\theta\le\pi\), the double has cone angle
\(2\theta\le2\pi\), while along non-vertex seam points the total angle is
\(2\pi\).  Thus the polyhedral curvature of the double is non-negative.  By
Petrunin's gluing theorem, in the form used in
Kapovitch--Ketterer--Sturm~\cite[Theorem~2.13]{KKS}, the metric double
\(X=D\Om\) is a compact two-dimensional Alexandrov space of curvature at least
\(0\).  Kapovitch--Ketterer--Sturm~\cite[Corollary~2.10]{KKS} then implies that
\((X,d_X,\mm_X)\) is an \(\RCD(0,2)\) space.

Finally, Proposition~\ref{prop:pancake-approx} gives measured
Gromov--Hausdorff convergence \(M_j\to X\).  The limit \(X=D\Om\) is not a
single point and satisfies \(0<\mm_X(X)<\infty\).  Hence the diameters and total
measures of \(M_j\) are, for all sufficiently large \(j\), bounded above and
below away from zero.  Thus the spaces \(M_j\) and \(X\) lie, after discarding
finitely many terms, in a common compact class of \(\RCD(0,2)\) spaces of the
type required in Theorem~\ref{thm:honda}.
\end{proof}

\begin{proposition}\label{prop:eigenfunction-convergence}
For all sufficiently large \(j\), \(\lambda_1(M_j)\) is simple.  Let \(u_j\) be an
\(L^2(\mm_{M_j})\)-normalized \(\lambda_1(M_j)\)-eigenfunction.  After passing to
a subsequence and multiplying \(u_j\) by signs, one has
\begin{equation}\label{eq:lambda1-convergence-to-limit}
\lambda_1(M_j)\to\lambda_1(X)=\mu_1^N(\Om),
\end{equation}
and there exist measured Gromov--Hausdorff approximation maps
\[
\mathcal F_j:M_j\to X
\]
and numbers \(\beta_j\downarrow0\) such that
\begin{equation}\label{eq:eigenfunction-uniform-convergence-to-Phi}
\sup_{q\in M_j}
\left|
u_j(q)-\Phi(\mathcal F_j(q))
\right|
\le \beta_j .
\end{equation}
Here \(\Phi\) is the normalized even double from
Proposition~\ref{prop:limit-spectrum}.
\end{proposition}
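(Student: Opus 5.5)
The plan is to apply Corollary~\ref{cor:honda-simple} with $k=1$ to the sequence $Y_j=M_j$ and the limit $Y=X=D\Om$. All the hypotheses have already been assembled earlier in the paper.

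First, Lemma~\ref{lem:RCD} shows that, after discarding finitely many terms, the $M_j$ and $X$ are compact $\RCD(0,2)$ spaces with uniformly controlled diameters and total masses. Proposition~\ref{prop:pancake-approx} gives measured Gromov--Hausdorff convergence $M_j\to X$. We therefore fix Borel maps $\mathcal F_j:M_j\to X$ that are $\gamma_j$-Gromov--Hausdorff approximations with $\gamma_j\downarrow0$ and $(\mathcal F_j)_\#\mm_{M_j}\rightharpoonup\mm_X$.

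Concretely, these maps can be taken as compositions of two pieces. The first is the horizontal projection $\pi_{\eps_j,\delta_j}$ from Lemma~\ref{lem:pancake-convergence-fixed-delta}. The second is the seam-compatible map $\mathcal F_{\delta_j}$ from Lemma~\ref{lem:doubles-converge}. The distortions add, so the composite is a $(4\eps_j+4b_{\delta_j}+2\delta_j)$-approximation together with a density error. Weak convergence of the push-forward measures follows from the two measure statements in those lemmas, because pushing forward under $\mathcal F_{\delta_j}$ is continuous for weak convergence on the compact targets. If one prefers, one can instead invoke the abstract existence of such maps from the definition of measured Gromov--Hausdorff convergence.

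Next, Proposition~\ref{prop:limit-spectrum} shows that $\lambda_1(X)=\mu_1^N(\Om)$ is simple, with normalized eigenfunction $\Phi$. Corollary~\ref{cor:honda-simple} then gives three conclusions:
\begin{itemize}
\item simplicity of $\lambda_1(M_j)$ for large $j$;
\item the eigenvalue convergence \eqref{eq:lambda1-convergence-to-limit}, via \eqref{eq:honda-eigenvalue-convergence};
\item after a subsequence and sign changes, the uniform estimate \eqref{eq:honda-uniform-with-maps} with $u=\Phi$, which is exactly \eqref{eq:eigenfunction-uniform-convergence-to-Phi}.
\end{itemize}

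The main point needing care is the measure normalization. Honda's theorem uses normalized reference measures, whereas here $\mm_{M_j}$ is the area measure and $u_j$ is $L^2(\mm_{M_j})$-normalized. I would handle this using the remark after Corollary~\ref{cor:honda-simple}. Put $c_j=\mm_{M_j}(M_j)$ and $c=\mm_X(X)$, so that $c_j\to c>0$. Normalizing the measures rescales eigenfunctions by $\sqrt{c_j}$ and $\sqrt c$ respectively. Apply the corollary in the normalized setting and multiply back. Since the normalized limit eigenfunction $\sqrt c\,\Phi$ is bounded, the error $|\sqrt{c_j}-\sqrt c|\,\|\Phi\|_\infty$ tends to zero. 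This error is absorbed into $\beta_j$, which is then defined as the resulting supremum; it tends to zero, and we may take it monotone by replacing it with $\sup_{i\ge j}$.
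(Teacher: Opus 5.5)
Your proposal is correct and follows the paper's own proof. You fix measured Gromov--Hausdorff approximation maps for the convergence of Proposition~\ref{prop:pancake-approx}, use Lemma~\ref{lem:RCD} and the simplicity from Proposition~\ref{prop:limit-spectrum}, and apply Corollary~\ref{cor:honda-simple} with $k=1$; the paper treats the measure normalization in its remark after that corollary, exactly as you do.

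One small caution on your optional concrete construction: $\mathcal F_{\delta_j}$ is discontinuous at the seam, so "push-forward is continuous for weak convergence" is not the right justification. Instead, use the $L^1$-density (total-variation) estimate from Lemma~\ref{lem:pancake-convergence-fixed-delta}. That estimate is not increased by any push-forward, and combined with a suitable choice of $\eps_j$ and Lemma~\ref{lem:doubles-converge} it gives the weak convergence of the composite push-forwards.
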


\begin{proof}
Choose measured Gromov--Hausdorff approximation maps
\[
\mathcal F_j:M_j\to X
\]
realizing the convergence in Proposition~\ref{prop:pancake-approx}.  By
Lemma~\ref{lem:RCD}, Theorem~\ref{thm:honda} applies to this convergence.
Therefore \eqref{eq:honda-eigenvalue-convergence} gives
\[
\lambda_k(M_j)\to\lambda_k(X)
\qquad(k=0,1,2,\ldots).
\]
Taking \(k=1\) and using Proposition~\ref{prop:limit-spectrum}, we obtain
\[
\lambda_1(M_j)\to\lambda_1(X)=\mu_1^N(\Om),
\]
which is \eqref{eq:lambda1-convergence-to-limit}.

By Proposition~\ref{prop:limit-spectrum}, the eigenvalue \(\lambda_1(X)\) is
simple.  Corollary~\ref{cor:honda-simple}, applied with \(k=1\) and
\(Y_j=M_j\), \(Y=X\), therefore implies that \(\lambda_1(M_j)\) is simple for all
sufficiently large \(j\).  Applying the same corollary to the normalized
eigenfunctions \(u_j\), and choosing the sign of \(u_j\) so that the limit is
\(\Phi\), gives
\[
\sup_{q\in M_j}
\left|
u_j(q)-\Phi(\mathcal F_j(q))
\right|
\to0 .
\]
This is exactly \eqref{eq:eigenfunction-uniform-convergence-to-Phi}.
\end{proof}

\subsection{A topological persistence lemma}

\begin{lemma}[Persistence of strict local maxima]\label{lem:persistence}
Let compact metric spaces $Y_j$ and $Y$ be realized as compact subsets of a common
compact metric space $Z$.  Assume that there are numbers $\rho_j\downarrow0$ such
that
\begin{equation}\label{eq:persistence-Hausdorff}
d_H^Z(Y_j,Y)\le \rho_j,
\end{equation}
where $d_H^Z$ denotes Hausdorff distance in the ambient space $Z$.  Assume also
that there are continuous functions $v_j:Y_j\to\R$, $v:Y\to\R$ such that
\begin{equation}\label{eq:persistence-alpha}
\alpha_j:=
\sup\bigl\{|v_j(y_j)-v(y)|:\ y_j\in Y_j,\ y\in Y,\ d_Z(y_j,y)\le\rho_j\bigr\}
\longrightarrow0.
\end{equation}
If $v$ has $m$ distinct isolated strict local maxima, then, for all sufficiently
large $j$, there are pairwise disjoint relatively open sets
$W_{1,j},\ldots,W_{m,j}\subset Y_j$ such that
\begin{equation}\label{eq:persistence-boundary-gap-conclusion}
\max_{\overline W_{i,j}}v_j>
\sup_{\partial_{Y_j}W_{i,j}}v_j
\qquad(i=1,\ldots,m),
\end{equation}
where closures and boundaries are taken relative to $Y_j$ and
$\sup\emptyset=-\infty$.  In particular, for all sufficiently large $j$, $v_j$
has at least $m$ distinct local maxima.
\end{lemma}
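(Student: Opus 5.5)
Let $q_1,\ldots,q_m\in Y$ be the strict local maxima of $v$. The plan is to transport small metric balls around each $q_i$ from $Y$ to $Y_j$ and to use the uniform closeness \eqref{eq:persistence-alpha} to carry over a boundary gap of the form in Lemma~\ref{lem:strict-neighborhoods-domain}.

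First fix the limit data. Choose $r>0$ so small that the closed balls $\overline B_Z(q_i,3r)$ are pairwise disjoint. Shrinking $r$ if needed, we may also assume $v(y)<v(q_i)$ for every $y\in Y$ with $0<d_Z(y,q_i)\le 3r$, by strictness and isolation of $q_i$. The annulus
\[
A_i:=\{y\in Y:\ r\le d_Z(y,q_i)\le 2r\}
\]
is compact and $v$ is continuous on it, so
\[
\eta:=\tfrac14\min_i\Bigl(v(q_i)-\max_{A_i}v\Bigr)>0.
\]
If some $A_i$ is empty, its term is omitted; that case is handled by the $\sup\emptyset=-\infty$ convention.

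Next define the sets in $Y_j$. Let
\[
W_{i,j}:=\{y_j\in Y_j:\ d_Z(y_j,q_i)<\tfrac32 r\},
\]
which is relatively open in $Y_j$. These sets are pairwise disjoint because the $3r$-balls are. The relative boundary $\partial_{Y_j}W_{i,j}$ is contained in $\{y_j\in Y_j: d_Z(y_j,q_i)=\tfrac32 r\}$.

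Now compare values. By \eqref{eq:persistence-Hausdorff}, there is $y_j^i\in Y_j$ with $d_Z(y_j^i,q_i)\le\rho_j$. Once $\rho_j<r$, this point lies in $W_{i,j}$, and \eqref{eq:persistence-alpha} gives $v_j(y_j^i)\ge v(q_i)-\alpha_j$. For a boundary point $b\in\partial_{Y_j}W_{i,j}$, pick $y\in Y$ with $d_Z(b,y)\le\rho_j$. Then $d_Z(y,q_i)\in[\tfrac32r-\rho_j,\tfrac32r+\rho_j]\subset[r,2r]$, so $y\in A_i$, and
\[
v_j(b)\le v(y)+\alpha_j\le v(q_i)-4\eta+\alpha_j .
\]
Hence once $2\alpha_j<4\eta$,
\[
\max_{\overline W_{i,j}}v_j\ \ge\ v_j(y^i_j)\ >\ \sup_{\partial_{Y_j}W_{i,j}}v_j ,
\]
which is \eqref{eq:persistence-boundary-gap-conclusion}. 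Note that $\overline W_{i,j}$ is compact, being a closed subset of the compact space $Y_j$, so the maximum is attained.

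For the final claim, let $z$ be a point of $\overline W_{i,j}$ where $v_j$ attains its maximum over that set. The strict gap shows $z\notin\partial_{Y_j}W_{i,j}$, so $z\in W_{i,j}$. Since $W_{i,j}$ is relatively open, $z$ is a local maximum of $v_j$ on $Y_j$. Disjointness of the $W_{i,j}$ makes the $m$ maxima distinct.

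The only delicate step is showing that points of the relative boundary of $W_{i,j}$ lie at distance exactly $\tfrac32r$ from $q_i$ and are $\rho_j$-close to the annulus $A_i$ in $Y$. This holds because $y\mapsto d_Z(y,q_i)$ is continuous on $Y_j$. I expect no real obstacle beyond bookkeeping with the ambient realization in $Z$.
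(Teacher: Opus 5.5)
Your proof is correct and follows the paper's argument essentially step by step. Both use a strict gap of $v$ on a compact annulus of $Y$ around each maximum, a test point of $Y_j$ within $\rho_j$ of $q_i$, and sets $W_{i,j}=Y_j\cap B_Z(q_i,\cdot)$ whose relative boundaries lie on a sphere that is $\rho_j$-close only to that annulus; the paper uses radii $r,2r,3r,4r$ where you use $r,\tfrac32 r,2r,3r$. The one bookkeeping slip is that the inclusion $[\tfrac32 r-\rho_j,\tfrac32 r+\rho_j]\subset[r,2r]$ needs $\rho_j\le r/2$, not just $\rho_j<r$, which is harmless since it holds for all large $j$.
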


\begin{proof}
Let $p_1,\ldots,p_m$ be distinct isolated strict local maxima of $v$.  For each
$i$, choose a relative open neighborhood $U_i\subset Y$ of $p_i$ such that
$v(y)<v(p_i)$ for every $y\in U_i\setminus\{p_i\}$.  Choose $r>0$ so small
that the closed $Z$-balls $\overline B_Z(p_i,4r)$ are pairwise disjoint and
$$
Y\cap\overline B_Z(p_i,4r)\subset U_i
\qquad(i=1,\ldots,m).
$$
For each $i$, set
\begin{equation}\label{eq:persistence-annulus}
A_i:=Y\cap\bigl(\overline B_Z(p_i,4r)\setminus B_Z(p_i,r)\bigr).
\end{equation}
If $A_i\ne\emptyset$, then $A_i$ does not contain $p_i$, and strict local
maximality gives
\begin{equation}\label{eq:persistence-annulus-gap}
\sup_{A_i}v\le v(p_i)-4\eta_i
\end{equation}
for some $\eta_i>0$.  If $A_i=\emptyset$, choose any $\eta_i>0$; in that case the
annular upper estimate below is vacuous for all large $j$.

For large $j$, \eqref{eq:persistence-Hausdorff} gives $a_{i,j}\in Y_j$ with
$d_Z(a_{i,j},p_i)\le\rho_j$.  Hence \eqref{eq:persistence-alpha} gives
\begin{equation}\label{eq:persistence-lower-test-point}
v_j(a_{i,j})\ge v(p_i)-\alpha_j>v(p_i)-\eta_i
\end{equation}
for all sufficiently large $j$.

Consider a point
$$
y_j\in Y_j\cap\bigl(\overline B_Z(p_i,3r)\setminus B_Z(p_i,2r)\bigr).
$$
For large $j$, there exists $y\in Y$ with $d_Z(y_j,y)\le\rho_j<r$.  Then
$y\in A_i$ by \eqref{eq:persistence-annulus}.  Hence, if $A_i=\emptyset$, no such
$y_j$ exists for large $j$.  If $A_i\ne\emptyset$, then using
\eqref{eq:persistence-annulus-gap} and \eqref{eq:persistence-alpha} gives
\begin{equation}\label{eq:persistence-upper-annulus}
v_j(y_j)\le v(y)+\alpha_j\le v(p_i)-4\eta_i+\alpha_j
< v(p_i)-2\eta_i
\end{equation}
for all sufficiently large $j$.

Set
$$
W_{i,j}:=Y_j\cap B_Z(p_i,3r).
$$
These sets are relatively open in $Y_j$ and pairwise disjoint for all large $j$.
The point $a_{i,j}$ belongs to $W_{i,j}$ for all large $j$, and
\eqref{eq:persistence-lower-test-point} gives
\begin{equation}\label{eq:persistence-W-lower}
\max_{\overline W_{i,j}}v_j\ge v_j(a_{i,j})>v(p_i)-\eta_i .
\end{equation}
The relative boundary $\partial_{Y_j}W_{i,j}$ is contained in
$Y_j\cap(\overline B_Z(p_i,3r)\setminus B_Z(p_i,2r))$; in particular the
annular estimate above gives
\begin{equation}\label{eq:persistence-W-boundary-upper}
\sup_{\partial_{Y_j}W_{i,j}}v_j\le v(p_i)-2\eta_i
\end{equation}
whenever the boundary is non-empty.  If the boundary is empty, the same statement
holds with the convention $\sup\emptyset=-\infty$.  Combining
\eqref{eq:persistence-W-lower} and
\eqref{eq:persistence-W-boundary-upper} proves
\eqref{eq:persistence-boundary-gap-conclusion}.  Any point where $v_j$ attains
its maximum on $\overline W_{i,j}$ must lie in $W_{i,j}$ and is therefore a local
maximum of $v_j$ on $Y_j$.  Since the sets $W_{i,j}$ are pairwise disjoint, these
local maxima are distinct.
\end{proof}

\subsection{A generic perturbation lemma}

\begin{proposition}[Non-degenerate perturbation]\label{prop:nondegenerate-perturbation}
Let $(M,g)$ be a smooth closed Riemannian surface with positive Gaussian
curvature.  Assume that $\lambda_1(g)$ is simple, and let $u$ be an
$L^2$-normalized $\lambda_1(g)$-eigenfunction.  Suppose that there are pairwise
disjoint open sets $W_1,\ldots,W_m\subset M$, each with non-empty boundary, such
that
\begin{equation}\label{eq:nondegenerate-perturbation-gap}
\max_{\overline W_i}u>
\sup_{\partial W_i}u
\qquad(i=1,\ldots,m),
\end{equation}
where closures and boundaries are taken in $M$.  Then, in every $C^\infty$
neighborhood of $g$, there exists a smooth metric $\widetilde g$ such that
$K_{\widetilde g}>0$, $\lambda_1(\widetilde g)$ is simple, and, after choosing
its sign, an $L^2$-normalized $\lambda_1(\widetilde g)$-eigenfunction has at
least $m$ distinct non-degenerate local maxima.

If, in addition, $M$ is diffeomorphic to $\mathbb S^2$, then $(M,\widetilde g)$
admits a smooth isometric embedding in $\R^3$ as a strictly convex surface.
\end{proposition}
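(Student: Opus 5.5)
The argument combines three ingredients: continuity of a simple eigenpair under metric perturbation, Uhlenbeck's genericity theorem, and openness of positive curvature.

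\textbf{Step 1: stability of the boundary gaps.} Since $\lambda_1(g)$ is simple, it is isolated from $\lambda_0=0$ and from $\lambda_2(g)$. Under $C^\infty$-small metric perturbations, the Laplacian $\Delta_{\widetilde g}$ depends analytically, or at least continuously, on $\widetilde g$ as an operator $H^{k+2}\to H^k$ on the fixed manifold $M$. Standard Kato perturbation theory, via the Riesz projection onto a small circle around $\lambda_1(g)$, then shows the following for every $\widetilde g$ in a $C^k$ neighborhood $\mathcal U$ of $g$.
\begin{itemize}
\item $\lambda_1(\widetilde g)$ is simple.
\item After a sign choice, the normalized eigenfunction $\widetilde u$ is close to $u$ in $H^{k}$.
\item Hence, by Sobolev embedding for $k\ge 2$, $\widetilde u$ is close to $u$ in $C^0(M)$.
\end{itemize}
Note that the $L^2$ normalization is taken with respect to $dA_{\widetilde g}$, which is also close to $dA_g$. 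Set
\[
\gamma=\min_i\Bigl(\max_{\overline W_i}u-\sup_{\partial W_i}u\Bigr)>0 .
\]
Shrinking $\mathcal U$ so that $\|\widetilde u-u\|_{C^0}<\gamma/3$, the gap \eqref{eq:nondegenerate-perturbation-gap} persists for $\widetilde u$. Any maximizer of $\widetilde u$ on $\overline W_i$ therefore lies in the open set $W_i$, so it is a local maximum of $\widetilde u$ on $M$. Since the $W_i$ are pairwise disjoint, these $m$ local maxima are distinct. We also shrink $\mathcal U$ so that $K_{\widetilde g}>0$; this is possible because Gaussian curvature depends continuously on the $2$-jet of the metric and $M$ is compact, so $\min K_g>0$ gives a uniform margin.

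\textbf{Step 2: genericity (the main obstacle).} Uhlenbeck \cite{Uhlenbeck1976} proves that the set of $C^k$, or $C^\infty$, metrics for which all eigenvalues are simple and all eigenfunctions are Morse is residual, hence dense. I would invoke this in the $C^\infty$ topology: the Baire-space version for $C^\infty$ metrics holds in her framework, since $C^\infty$ is a Fréchet manifold and residuality holds for each $C^k$. From it we choose $\widetilde g$ in the intersection of $\mathcal U$ with the prescribed $C^\infty$ neighborhood. The subtle point is to be sure the Morse conclusion applies to the $\lambda_1$-eigenfunction specifically, with no additional hypotheses. Uhlenbeck's statement covers all eigenfunctions of the generic metric, so it suffices. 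If one wants to avoid the $C^\infty$ issue, one can restrict to conformal perturbations $e^{2w}g$ with $w\in C^\infty$; the same transversality argument applies there. For $\widetilde g$ chosen this way, $\widetilde u$ is Morse, so each local maximum found in Step 1 is a non-degenerate critical point, in fact a non-degenerate local maximum.

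\textbf{Step 3: embedding.} If $M\cong\mathbb S^2$, then $(M,\widetilde g)$ is a smooth metric of positive Gaussian curvature on the sphere. The solution of the Weyl problem by Nirenberg \cite{Nirenberg1953} and Pogorelov \cite{Pogorelov1952} gives a smooth isometric embedding into $\R^3$ whose image bounds a convex body. Positive curvature makes the second fundamental form definite, so the surface is strictly convex.
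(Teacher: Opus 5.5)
Your proposal is correct and follows the paper's proof essentially step for step. Both arguments use the same four ingredients: $C^0$-continuity of the sign-normalized simple $\lambda_1$-eigenfunction to preserve the boundary gaps (the paper uses the margin $\sigma/4$, you use $\gamma/3$), shrinking the neighborhood using openness of $K>0$ and of simplicity, Uhlenbeck's genericity to get a Morse $\lambda_1$-eigenfunction inside the intersected neighborhood, and Nirenberg--Pogorelov for the convex embedding; the only slip is cosmetic: Uhlenbeck's Morse conclusion applies to eigenfunctions of the non-zero eigenvalues (the constant $\lambda_0$-eigenfunction is not Morse), and your conformal-perturbation aside is not needed.
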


\begin{proof}
Let
$$
\sigma:=\min_{1\le i\le m}
\left(\max_{\overline W_i}u-\sup_{\partial W_i}u\right)>0.
$$
Since $\lambda_1(g)$ is simple and
$$
0=\lambda_0(g)<\lambda_1(g)<\lambda_2(g),
$$
standard elliptic perturbation theory for a simple eigenvalue implies that, after
choosing signs, the $L^2$-normalized first positive eigenfunction depends
continuously on the metric in the $C^0$ topology under sufficiently small
$C^\infty$ perturbations of $g$.  Thus there is a $C^\infty$ neighborhood
$\mathcal U_0$ of $g$ such that, for every $h\in\mathcal U_0$, the normalized
$\lambda_1(h)$-eigenfunction $u_h$ can be chosen so that
\begin{equation}\label{eq:eigenfunction-C0-under-perturbation}
\|u_h-u\|_{C^0(M)}<\frac{\sigma}{4}.
\end{equation}
Shrinking $\mathcal U_0$ if necessary, we may also assume that
$K_h>0$ and that $\lambda_1(h)$ remains simple for all $h\in\mathcal U_0$; the
first assertion follows from the $C^2$-openness of positive Gaussian curvature,
and the second from spectral-gap continuity.

For every $h\in\mathcal U_0$, estimate
\eqref{eq:eigenfunction-C0-under-perturbation} gives
$$
\max_{\overline W_i}u_h
\ge \max_{\overline W_i}u-\frac{\sigma}{4}
> \sup_{\partial W_i}u+\frac{3\sigma}{4}
\ge \sup_{\partial W_i}u_h+\frac{\sigma}{2}.
$$
Hence $u_h$ has at least one local maximum in each $W_i$.

Let $\mathcal U$ be any prescribed $C^\infty$ neighborhood of $g$.  By
Uhlenbeck's generic theorem \cite{Uhlenbeck1976}, the set of smooth metrics for
which all non-zero eigenvalues are simple and all corresponding eigenfunctions
are Morse is residual and dense.  Therefore we may choose
$\widetilde g\in\mathcal U\cap\mathcal U_0$ with this generic property.  The
preceding boundary-gap argument applied to $h=\widetilde g$ gives at least one
local maximum of the normalized $\lambda_1(\widetilde g)$-eigenfunction in each
$W_i$.  Since this eigenfunction is Morse, all its critical points are
non-degenerate; in particular these local maxima are non-degenerate.  The sets
$W_i$ are pairwise disjoint, so the local maxima obtained in this way are
distinct.

If $M$ is diffeomorphic to $\mathbb S^2$, the smooth solution of Weyl's isometric
embedding problem for positively curved metrics on the two-sphere, due to
Nirenberg and Pogorelov \cite{Nirenberg1953,Pogorelov1952}, gives a smooth
isometric embedding of $(M,\widetilde g)$ into $\R^3$ as a strictly convex
surface.
\end{proof}

\section{Proof of the main theorem}\label{sec:proof-main}

\begin{proof}[Proof of Theorem~\ref{thm:main}]
Fix $m\ge2$.  Choose the Miyamoto polygon $\Om$ and eigenfunction $\phi$ from
Theorem~\ref{thm:miyamoto}.  Let $X=D\Om$ and let $\Phi$ be the normalized even
double from \eqref{eq:even-double-eigenfunction}.  By
Proposition~\ref{prop:limit-spectrum}, $\Phi$ is the unique normalized
$\lambda_1(X)$-eigenfunction up to sign and has at least $m$ isolated strict
local maxima.

Let $M_j$ be the smooth positively curved pancakes from
Proposition~\ref{prop:pancake-approx}.  By
Proposition~\ref{prop:eigenfunction-convergence}, for all sufficiently large $j$
the eigenvalue $\lambda_1(M_j)$ is simple; after passing to a subsequence and
choosing signs, normalized $\lambda_1(M_j)$-eigenfunctions $u_j$ converge
uniformly to $\Phi$ along measured Gromov--Hausdorff approximations.

We now verify the hypotheses of Lemma~\ref{lem:persistence}.  By the concrete
form of Honda convergence stated in Theorem~\ref{thm:honda}, after passing to
the same subsequence we have measured Gromov--Hausdorff approximation maps
$\mathcal F_j:M_j\to X$ and numbers $\gamma_j,\beta_j\downarrow0$ such that
$\mathcal F_j$ is a $\gamma_j$-Gromov--Hausdorff approximation and
\begin{equation}\label{eq:main-uniform-Honda-convergence}
\sup_{q\in M_j}|u_j(q)-\Phi(\mathcal F_j(q))|\le \beta_j.
\end{equation}
By the standard realization theorem for Gromov--Hausdorff convergence, realize
$M_j$ and $X$ isometrically in a common compact metric space $Z$ so that
\begin{equation}\label{eq:main-approximation-maps}
d_Z(q,\mathcal F_j(q))\le \gamma_j
\qquad(q\in M_j).
\end{equation}
Since $\mathcal F_j(M_j)$ is $\gamma_j$-dense in $X$, this realization also gives
$d_H^Z(M_j,X)\le 2\gamma_j$.  Set
$$
\rho_j:=2\gamma_j,
$$
so that $\rho_j\downarrow0$ and
\begin{equation}\label{eq:main-Hausdorff-realization}
d_H^Z(M_j,X)\le \rho_j,
\qquad
\gamma_j\le \rho_j.
\end{equation}
Let $\omega_\Phi$ be a modulus of continuity of $\Phi$ on the compact space $X$.
If $q\in M_j$, $y\in X$, and $d_Z(q,y)\le\rho_j$, then, by
\eqref{eq:main-approximation-maps},
$$
d_X(\mathcal F_j(q),y)=d_Z(\mathcal F_j(q),y)
\le d_Z(\mathcal F_j(q),q)+d_Z(q,y)
\le \gamma_j+\rho_j
\le 2\rho_j.
$$
Here the equality holds because $\mathcal F_j(q)$ and $y$ both belong to the
isometrically embedded copy of $X$ inside $Z$.
Therefore, by \eqref{eq:main-uniform-Honda-convergence},
\begin{equation}\label{eq:main-persistence-alpha-bound}
|u_j(q)-\Phi(y)|
\le |u_j(q)-\Phi(\mathcal F_j(q))|+|\Phi(\mathcal F_j(q))-\Phi(y)|
\le \beta_j+\omega_\Phi(2\rho_j)\to0.
\end{equation}
Equations \eqref{eq:main-Hausdorff-realization} and
\eqref{eq:main-persistence-alpha-bound} are precisely the hypotheses
\eqref{eq:persistence-Hausdorff} and \eqref{eq:persistence-alpha} of
Lemma~\ref{lem:persistence}, with $Y_j=M_j$, $Y=X$, $v_j=u_j$, and $v=\Phi$.
Thus, for all sufficiently large $j$, there are pairwise disjoint open sets
$W_{1,j},\ldots,W_{m,j}\subset M_j$ such that
\begin{equation}\label{eq:main-boundary-gaps-on-pancake}
\max_{\overline W_{i,j}}u_j>
\sup_{\partial W_{i,j}}u_j
\qquad(i=1,\ldots,m).
\end{equation}
In particular, $u_j$ has at least $m$ distinct local maxima. Since $m\ge2$ and $M_j$ is connected, each $W_{i,j}$ is a proper non-empty
open subset of $M_j$. Hence $\partial W_{i,j}\ne\emptyset$ for every $i$.

Choose such a sufficiently large $j$.  Let $g_j$ be the induced metric on the
smooth embedded pancake $M_j$.  Applying
Proposition~\ref{prop:nondegenerate-perturbation} to
$(M_j,g_j)$, the eigenfunction $u_j$, and the sets
$W_{1,j},\ldots,W_{m,j}$, we obtain a smooth metric $\widetilde g_j$ on the
underlying two-sphere, arbitrarily close to $g_j$, such that
$K_{\widetilde g_j}>0$, $\lambda_1(\widetilde g_j)$ is simple, and a normalized
$\lambda_1(\widetilde g_j)$-eigenfunction $\widetilde u_j$, after choosing its
sign, has at least $m$ distinct non-degenerate local maxima.

By the last assertion of Proposition~\ref{prop:nondegenerate-perturbation},
there is a smooth isometric embedding
$$
F_j:(M_j,\widetilde g_j)\longrightarrow \R^3
$$
as a strictly convex surface.  Set
$$
M_m:=F_j(M_j)\subset\R^3.
$$
The induced metric on $M_m$ is isometric to $(M_j,\widetilde g_j)$, hence has
positive Gaussian curvature and simple first positive eigenvalue.  Transporting
$\widetilde u_j$ by the isometry gives a normalized first nonzero eigenfunction
on $M_m$ with at least $m$ distinct non-degenerate local maxima.  Since the first
eigenspace is one-dimensional, the opposite sign has at least $m$ distinct
non-degenerate local minima.
\end{proof}

\subsection*{Availability of Data}
No datasets were generated or analyzed in this work.

\subsection*{Declarations of Conflict of Interest}
The author declares no conflict of interest.

\end{document}